\renewcommand*\l@section{\@dottedtocline{1}{1.5em}{2.3em}}
\theoremstyle{plain}
\newtheorem{theorem}{Theorem}
\newtheorem{proposition}[theorem]{Proposition}
\newtheorem{lemma}[theorem]{Lemma}
\newtheorem{example}[theorem]{Example}
\newtheorem{corollary}[theorem]{Corollary}
\theoremstyle{definition}
\newtheorem{definition}{Definition}
\newtheoremstyle{myrem}
 {3pt}
 {3pt}
 {\normalsize}
 { }
 {\itshape}
 {:}
 { }
 {}
 \theoremstyle{myrem}
 \newtheorem{remark}{Remark}
 \appto\remark{\leftskip\parindent}
 \appto\remark{\rightskip\parindent}
\numberwithin{equation}{section}
\numberwithin{theorem}{section}
\begin{document}

\begin{center}
{\Large {\textbf {Weighted Path homology of Weighted Digraphs and 
Persistence}}}
 \vspace{0.58cm}\\

 Chong Wang*\dag,  Shiquan Ren*,   Jie Wu*,  Yong Lin*

\bigskip

\footnotetext[1]
{ {\bf 2010 Mathematics Subject Classification.}  	Primary  55U10, 	55U15;     Secondary  	 05C10,	05C65.}

\footnotetext[2]
{{\bf Keywords and Phrases.} path homology, weighted path homology, persistent path homology, weighted persistent path homology.}

\footnotetext[3]
{* first authors;   \dag   corresponding author. }

\bigskip

\begin{quote}
\begin{abstract}
\smallskip

 In recent years,  A. Grigor'yan, Y. Lin, Y. Muranov and S.T. Yau  \cite{9,10,yau3,yau4}  constructed a path homology theory for digraphs.  Later,  S. Chowdhury and F. M\'{e}moli  \cite{11} studied the persistent path homology for directed networks.  In this paper,  we generalize the path homology theory for digraphs and construct a weighted path homology for weighted digraphs.  We study the persistent weighted path homology for weighted digraphs and detect the effects of the weights on the persistent weighted path homology.  We prove a persistent version of a K\"{u}nneth-type formula for joins of weighted digraphs.  
\end{abstract}
\end{quote}

\end{center}

\bigskip


\section{Introduction}\label{s1}


Directed graphs, or digraphs for short, is an important mathematical model for describing complex networks in information science.  A digraph $G$ is a pair $(V,E)$ where $V$ is a set, and $E$ is a subset of all the ordered  pairs $(u,v)\in V\times V$ such that $u\neq v$. Note that ($u, v)\in E$ implies  that $u, v$ are distinct.   The elements in $V$ are called vertices and the elements in $E$ are called directed edges.    Throughout this paper, we assume that  $V$ is finite, thus $E$ is finite as well.

So far, various approaches to construct (co)homology theories for digraphs have been studied. 
  For example, D. Happel \cite{23} studied the Hochschild homology of finite dimensional algebras, which can be applied to the path algebra of digraphs. However,  it was shown in \cite{23} that the Hochschild homologies of order greater than one are trivial, which makes this approach less attractive.   Another example is that the homologies constructed by H. Barcelo in \cite{2} can be applied to digraphs.  However, these homologies may not be functorial with respect to morphisms of digraphs.  

In recent years,  A. Grigor'yan, Y. Lin, Y. Muranov and S.T. Yau  \cite{9,10,yau3,yau4}  constructed a path homology theory for digraphs.   They construct a chain complex by taking certain linear combinations  of all the paths (called allowed elementary paths) on the digraphs, along with the directions of the edges.  For $k\geq 0$,  the face maps $d_k$ are given by deleting the $k$-th single vertice in the paths, and the boundary maps  are the alternating sums of the $d_k$'s.  By the examples in \cite{9,yau4},  the path homology of digraphs is highly non-trivial.  Moreover, it is proved in \cite{10} that  the path homology is functorial with respect to morphisms of digraphs, and is invariant up to certain homotopy relations of these  morphisms.

Based on the path homology theory given in \cite{9,10,yau3,yau4}, some K\"{u}nneth formulas have been proved by A. Grigor'yan, Y. Muranov and S.T. Yau in \cite{3}, and a path homology theory for hypergraphs has been constructed by   A. Grigor'yan, 
R.  Jimenez, 
 Y. Muranov and 
 S.T. Yau in \cite{hd}.  Moreover,  as applications of the path homology theory,  the persistent path homology for directed networks has been investigated by  S. Chowdhury and F. M\'{e}moli in \cite{11}.

On the other hand,   the homology theory of  simplicial complexes has been generalized to weighted homology theory for weighted simplicial complexes by R.J. MacG. Dawson \cite{weight1} in 1990,  and later studied by     D. Horak and J. Jost \cite{adv1},   S. Ren, C. Wu and J. Wu \cite{27,28}, etc.   Given an (abstract) simplicial complex,  one may  assign a weight $w(i)$ to each vertex $i$,  then define the weighted boundary operators as the linear combination
\begin{eqnarray}\label{eq-0.0.0}
\sum_{k\geq 0} (-1)^k w(i_k) d_k
\end{eqnarray}
 where $i_k$ is the $k$-th vertex of a simplex.   
It is proved in \cite{weight1,adv1,28} that the graded free module generated by the simplices equipped with the the weighted boundary operators gives a chain complex. Hence the weighted homology is well-defined.

\smallskip

In this paper,  we borrow the idea of weighted homology to generalize the path homology theory of digraphs.  We  construct a weighted path homology theory for weighted digraphs and study the persistence.  We prove a K\"{u}nneth-type formula for the weighted path homology of joins of weighted digraphs.

For each vertex $i$ in a digraph $G$, we assign a weight $w(i)$. We define the weighted boundary operators formally as the linear combination (\ref{eq-0.0.0})  where $i_k$ is the $k$-th vertex of a path and $d_k$ is the face map deleting the $k$-th vertex of a path.  In Section~\ref{s333},  we prove that the weighted path homology is well-defined and is functorial up to morphisms of weighted digraphs. In Section~\ref{s444},   we study the persistence of weighted path homology.  We give an isomorphism result for the persistent weighted path homology in Theorem~\ref{th-5.z}.  We prove  in Theorem~\ref{th-5.w} that   with field coefficients, the persistent weighted path homology is the same as the usual persistent path homology  studied in \cite{11};  and  we give some examples Subsection~\ref{ss4.3}  to show that with integral coefficients, the persistent weighted path homology depends on the weights.  Finally,  in Section~\ref{sss5},  we prove a persistent version of the K\"{u}nneth-type formula for the weighted path homology of joins of weighted digraphs.

{

\section{Preliminaries: path homology of digraphs}\label{s1}

In this section,  we review some backgrounds in \cite{9,10} about paths on digraphs,   the path homology groups of digraphs,  morphisms of digraphs,   and the induced homomorphisms of  path homology groups.  We make a slight generalization of \cite{9,10} by taking the coefficients in  a commutative ring $R$ with unit, instead of in a field.

\smallskip

\subsection{Paths on finite sets and chain complexes}

Let $V$ be a nonempty finite set whose elements will be called vertices. For any  integer $p\geq -1$, an {\it elementary $p$-path} on  $V$ is an (ordered) sequence $\{i_k\}_{k=0}^p$ consisting of $p+1$ vertices in $V$. For convenience,  $\{i_k\}^p_{k=0}$  will  be alternatively denoted as $e_{i_0 \cdots i_p}$.   An elementary $(-1)$-path on $V$ is defined as the empty set $\emptyset$.   An elementary $p$-path $e_{i_0\cdots i_{p}}$ on a set $V$ is called {\it regular}  if $i_{k}\not=i_{k+1}$ for all $k=0,\cdots,{p-1}$, and  {\it non-regular}  otherwise.

Let $R$ be a commutative ring with unit. Consider the free $R$-module $\Lambda_p(V)$  consisting of all the formal linear combinations of elementary $p$-paths on $V$ with  coefficients in $R$.  The elements of $\Lambda_p(V)$ are called {\it $p$-paths} on $V$.  
The set $\{e_{i_0\cdots i_p}: i_0, \cdots, i_p\in V\}$ is a basis in $\Lambda_p(V)$. Hence each $p$-path $v\in \Lambda_p(V)$ has a unique representation in the form
\begin{eqnarray*}
v=\sum_{i_0, \cdots, i_p\in V}v^{i_0 \cdots i_p}e_{i_0 \cdots i_p},
\end{eqnarray*}
where $v^{i_0 \cdots i_p}\in  R$ are the coefficients in $R$. For example, $\Lambda_0(V)$ consists of all the linear combinations of the elements $e_i$ where $i\in V$, $\Lambda_1(V)$ consists of all the linear combinations of the elements $e_{ij}$ where $(i,j)\in V\times V$, and so on.  Note that $\Lambda_{-1}(V)$ consists of all multiples of the unit $e\in R$ so that $\Lambda_{-1}(V)= R$.  
 For any $p\geqslant{0}$, we define the boundary operator 
 \begin{eqnarray}\label{eq-2.1}
 \partial:\Lambda_p(V)\longrightarrow\Lambda_{p-1}(V)
 \end{eqnarray}
   by setting 
 \begin{eqnarray*}
 \partial(e_{i_0\cdots i_p})= \sum_{q=0}^{p}(-1)^q d_q (e_{i_0\cdots i_p}),
 \end{eqnarray*}
where 
\begin{eqnarray*}
d_q (e_{i_0\cdots i_p})=e_{i_0\cdots \widehat{i _q}\cdots i_p}
\end{eqnarray*}
 are the face maps,  and 
$\widehat{i _q}$ means omission of the vertex $i_q$.  
 For example, 
\begin{eqnarray*}
\partial(e_{i})=e,   ~~~\partial({e_{ij}})=e_j-e_i,  ~~~\partial{e_{ijk}}=e_{jk}-e_{ik}+e_{ij}.
\end{eqnarray*}
 We extends $\partial$ linearly over $R$ and obtain the map (\ref{eq-2.1}).   It follows from  \cite[Section~2.1]{9}  that  $\partial^2=0$.  

For each $p\geq -1$, let $\mathcal{R}_{p}(V)$ be the sub-$R$-module of $\Lambda_{p}(V)$ generated by all the regular elementary $p$-paths, and let $\mathcal{I}_{p}(V)$ be the sub-$R$-module of $\Lambda_{p}(V)$ generated by all the non-regular elementary $p$-paths.  By \cite[Section~2.3]{9}, we have the $R$-module isomorphism
\begin{eqnarray}\label{eq-2.2}
\mathcal{R}_{p}(V)\cong \Lambda_{p}(V)/\mathcal{I}_{p}(V). 
\end{eqnarray}
By \cite[Lemma~2.9]{9}, the boundary operator (\ref{eq-2.1}) induces a boundary operator on $\Lambda_{p}(V)/\mathcal{I}_{p}(V)$.  By the isomorphism (\ref{eq-2.2}),  we have an induced boundary operator 
\begin{eqnarray}\label{eq-2.3}
\partial:  \mathcal{R}_{p}(V)\longrightarrow \mathcal{R}_{p-1}(V)
\end{eqnarray} 
as the pull-back of the boundary operator on $\Lambda_{p}(V)/\mathcal{I}_{p}(V)$.   Consequently,  (\ref{eq-2.3}) gives a chain complex 
\begin{eqnarray}\label{eq-2.5}
\cdots \overset{\partial}{\longrightarrow} \mathcal{R}_{p}(V)\overset{\partial}{\longrightarrow} \mathcal{R}_{p-1}(V)\overset{\partial}{\longrightarrow}\cdots \overset{\partial}{\longrightarrow}\mathcal{R}_{0}(V) \overset{\partial}{\longrightarrow} R \overset{\partial}{\longrightarrow} 0. 
\end{eqnarray}
In the remaining part of this paper, we always use $\partial$ to denote the boundary operator of the chain complex (\ref{eq-2.5}) if there is no extra claim.

\smallskip

\subsection{Paths on digraphs and the path homology}

Let $G=(V,E)$ be a digraph.  An elementary $p$-path $e_{i_0 \cdots i_p} \in \Lambda_{p}(V)$ is called {\it allowed} on $G$ if $i_{k-1}\to i_k$ for all $k=1,2,...,p$.  An allowed elementary $p$-path $e_{i_0\cdots i_p}$ is called  {\it closed} if $i_0 =i_p$.   Let $\mathcal{A}_p(G)$ be the free $R$-module generated by all the  allowed elementary $p$-paths in $G$.   Then  $\mathcal{A}_p(G)$  is a sub-$R$-module of  $\mathcal{R}_{p}(V)$  consisting of all the formal linear combinations of the  allowed elementary $p$-paths $e_{i_0\cdots i_p}$ with coefficients in $R$.  Note that under the boundary operator $\partial$,  the image  of an allowed path does not have to be allowed.  Hence $\partial$ may not map $\mathcal{A}_p(G)$  to $\mathcal{A}_{p-1}(G)$.   
Nevertheless,  $\mathcal{A}_p(G)$ has the sub-$R$-module 
\begin{eqnarray*}
\Omega_p(G)=\{v\in\mathcal{A}_p(G) : \partial{v}\in\mathcal{A}_{p-1}(G)\}
\end{eqnarray*}
whose elements are called {\it $\partial$-invariant $p$-paths}, 
which satisfies $\partial\Omega_p(G)\subset\Omega_{p-1}(G)$ for all $p\geq{-1}$ (cf. \cite[Section~3.3]{9}).  Hence (\ref{eq-2.5}) has a sub-chain complex
\begin{eqnarray}\label{eq-2.6}
\cdots \overset{\partial}{\longrightarrow} \Omega_{p}(G)\overset{\partial}{\longrightarrow}\Omega_{p-1}(G)\overset{\partial}{\longrightarrow}\cdots \overset{\partial}{\longrightarrow}\Omega_{0}(G) \overset{\partial}{\longrightarrow} R \overset{\partial}{\longrightarrow} 0. 
\end{eqnarray}
The truncated version of (\ref{eq-2.6}) is the chain complex by replacing $\Omega_{0}(G) \overset{\partial}{\longrightarrow} R$ with $\Omega_{0}(G) \overset{\partial}{\longrightarrow} 0$ in (\ref{eq-2.6}).  The homology groups 
\begin{eqnarray*}
H_p(G;R)=\text{Ker} (\partial \mid_{\Omega_{p}})/\text{Im} (\partial\mid_{\Omega_{p+1}}), ~~~ p\geq 0  
\end{eqnarray*}
of the truncated version of (\ref{eq-2.6})  are called the {\it path homology groups} of the digraph $G$ (cf. \cite[Definition~3.12]{9}).
As a concrete example, the construction of the chain complex (\ref{eq-2.6}) and the calculation of the path homology of digraphs are illustrated in \cite[Example~10]{11}.

\smallskip

\subsection{Morphisms of digraphs and induced homomorphisms between path homologies}

Let $G=(V,E)$ and $G'=(V',E')$ be two digraphs.   
By \cite[Definition~2.2]{10}, 
a  {\it  morphism of digraphs} is a map $f: V\longrightarrow V'$ such that whenever $i\to j$ is a directed edge in $E$,  we have either 
$f(i)\to f(j)$ is a directed edge in $E'$, 
 or 
$f(i)=f(j)$.  We write such a morphism as $f: G\longrightarrow G'$.

Let $f: G\longrightarrow G'$ be a morphism of digraphs.  For each $p\geq -1$, we have an induced homomorphism of $R$-modules
 \begin{eqnarray}\label{eq-4.1}
 f_{\#\#\#}: \Lambda_p(V)\longrightarrow \Lambda_p(V')  
 \end{eqnarray}
which sends $e_{i_0\cdots i_p}$ to $e_{f(i_0)\cdots f(i_p)}$ for any elementary $p$-path $e_{i_0\cdots i_p}$ on $V$. 
We observe that $f_{\#\#\#}$ in (\ref{eq-4.1}) sends non-regular elementary $p$-paths on $V$ to non-regular elementary $p$-paths on $V'$.  Hence in (\ref{eq-4.1}), $f_{\#\#\#}(\mathcal{I}_p(V))\subseteq \mathcal{I}_p(V')$.  Thus we have a quotient homomorphism of $R$-modules
\begin{eqnarray}\label{eq-4.2}
f_{\#\#}:    \Lambda_{p}(V)/\mathcal{I}_{p}(V)\longrightarrow \Lambda_{p}(V')/\mathcal{I}_{p}(V'). 
\end{eqnarray}
With the help of the isomorphism (\ref{eq-2.2}),  we have that (\ref{eq-4.2}) induces a morphism of $R$-modules
\begin{eqnarray}\label{eq-4.3}
f_{\#}: \mathcal{R}_{p}(V)\longrightarrow \mathcal{R}_{p}(V'). 
\end{eqnarray}
Let $\partial$ and $\partial'$ be the boundary operators of $\mathcal{R}_{*}(V)$ and $\mathcal{R}_{*}(V')$ respectively.  It follows from a straight-forward calculation that 
\begin{eqnarray}\label{eq-4.6}
\partial' f_{\#}=f_{\#}\partial. 
\end{eqnarray}
Hence $f_{\#}$ is a chain map. Moreover,  by \cite[Theorem~2.10]{10},  the restriction of $f_{\#}$ gives  a chain map
\begin{eqnarray}\label{eq-4.a1}
f_{\#}:  \Omega_p(G)\longrightarrow\Omega_p(G'), ~~~ p\geq 0, 
\end{eqnarray}
which sends an allowed elementary $p$-path $e_{i_0\cdots i_p}$   to $e_{f(i_0)\cdots f(i_p)}$ whenever $e_{f(i_0)\cdots f(i_p)}$ is regular, and sends the allowed elementary $p$-path $e_{i_0\cdots i_p}$ to $0$ otherwise (whenever $e_{f(i_0)\cdots f(i_p)}$ is non-regular).  
Therefore, (\ref{eq-4.a1}) induces a homomorphism between the path homologies
\begin{eqnarray}\label{eq-4.a2}
f_*: H_p(G;R)\longrightarrow H_p(G';R), ~~~ p\geq 0. 
\end{eqnarray}

\section{Weighted path homology of weighted digraphs}\label{s333}

In this section,  we generalize the path homology of digraphs in \cite{9} and construct a weighted path homology for weighted digraphs.  We define morphisms of weighted digraphs.  As a generalization of \cite[Theorem~2.1]{10},  we prove that such a morphism induces homomorphisms between the weighted path homologies, in Theorem~\ref{th-4.3}.

\smallskip

\subsection{The construction of weighted path homology}

Let $G=(V,E)$ be a digraph.  Let $R$ be a commutative ring with unit. 

\begin{definition}\label{def-3.1}
A  {\it weight} on   $G$ is a function function $w: V\longrightarrow R$.  We call $G$ equipped with $w$  a {\it weighted digraph} and denote the couple  as $(G,w)$. 
\end{definition}

Let $w$ be a weight on $G$.  
We define the {\it weighted boundary operator }  
\begin{eqnarray}\label{eq-3.11}
\partial^w:  \Lambda_p(V)\longrightarrow\Lambda_{p-1}(V)
\end{eqnarray}
by setting
\begin{eqnarray*}
\partial^w (e_{i_0\cdots i_p})= \sum_{q=0}^{p}(-1)^q w(i_q) d_q (e_{i_0\cdots i_p})
\end{eqnarray*}
and extending linearly over $R$.  The next lemma follows from a straight-forward calculation.  

\begin{lemma}\label{pr-3.1}
$\partial^{w} \circ \partial^{w}=0$.
\end{lemma}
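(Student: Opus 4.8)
The plan is to prove the identity by a direct computation on an arbitrary elementary path, mimicking the classical verification that $\partial^2=0$ while carrying the weight coefficients along. Since $\partial^w$ in (\ref{eq-3.11}) is $R$-linear and defined on the whole module $\Lambda_p(V)$, it suffices to check that $\partial^w\partial^w(e_{i_0\cdots i_p})=0$ for every elementary $p$-path $e_{i_0\cdots i_p}$; no regularity conditions intervene at this stage.

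First I would expand $\partial^w(e_{i_0\cdots i_p})=\sum_{q=0}^{p}(-1)^q w(i_q)\,e_{i_0\cdots\widehat{i_q}\cdots i_p}$ and apply $\partial^w$ to each summand. The one point that needs care is the relabeling of positions after a deletion: in the $(p-1)$-path $e_{i_0\cdots\widehat{i_q}\cdots i_p}$ the slot numbered $s$ is occupied by $i_s$ when $s<q$ and by $i_{s+1}$ when $s\geq q$, so
\begin{eqnarray*}
\partial^w(e_{i_0\cdots\widehat{i_q}\cdots i_p})=\sum_{r<q}(-1)^r w(i_r)\,e_{\cdots\widehat{i_r}\cdots\widehat{i_q}\cdots}+\sum_{r>q}(-1)^{r-1}w(i_r)\,e_{\cdots\widehat{i_q}\cdots\widehat{i_r}\cdots}.
\end{eqnarray*}
The crucial observation is that deleting $i_q$ does not change the labels of the remaining vertices, so the weight accumulated after the two applications is always the product $w(i_q)w(i_r)$, which is symmetric in the unordered pair $\{q,r\}$ of deleted indices.

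Next I would assemble $\partial^w\partial^w(e_{i_0\cdots i_p})$ as a double sum and regroup the terms by the unordered pair $\{q,r\}$ of vertices that get deleted. For each pair $r<q$ the path $e_{\cdots\widehat{i_r}\cdots\widehat{i_q}\cdots}$ shows up exactly twice: once by deleting $i_q$ then $i_r$ (sign $(-1)^{q}(-1)^{r}$, coefficient $w(i_q)w(i_r)$), and once by deleting $i_r$ then $i_q$ (sign $(-1)^{r}(-1)^{q-1}$, same coefficient). Since $(-1)^{q+r}+(-1)^{q+r-1}=0$, the two contributions cancel, and summing over all pairs gives $\partial^w\partial^w=0$. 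The computation is pure bookkeeping; the only mild obstacle is handling the position-relabeling and the attendant sign shift correctly --- exactly as in the unweighted case --- and observing that the symmetry of $w(i_q)w(i_r)$ is precisely what keeps the cancellation intact once the weights are inserted.
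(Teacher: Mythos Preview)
Your proposal is correct and follows essentially the same approach as the paper's proof: both expand $\partial^w\partial^w$ on an elementary path, split the resulting double sum according to the relative order of the two deleted indices, and observe that the two contributions to each term cancel because the signs differ by $-1$ while the weight factor $w(i_q)w(i_r)$ is symmetric. Your write-up is slightly more explicit about the position-relabeling after a deletion, but the argument is the same.
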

\begin{proof}
Let $e_{i_0...i_p}$ be an elementary $p$-path. Then
\begin{eqnarray*}
\partial^{w} \circ \partial^{w} (e_{i_0...i_p})&=& \partial^{w} \sum_{q=0}^{p}(-1)^q w(i_q)  (e_{i_0\cdots \widehat{i_q} \cdots i_p})\\
&=& \sum_{0\leq{r}<q\leq{p}}(-1)^{r+q} {w(i_r)w(i_q)(e_{i_0\cdots \widehat{i_r} \cdots \widehat{i_q} \cdots i_p})}\\
&&+\sum_{0\leq{q}<r\leq{p}}(-1)^{r+q-1} {w(i_r)w(i_q)(e_{i_0\cdots \widehat{i_q} \cdots \widehat{i_r} \cdots i_p})}. 
\end{eqnarray*}
After switching $r$ and $q$ in the  sums, we see that the two sums in the last equality cancel out. Hence
 $\partial^{w} \circ \partial^{w}(e_{i_0...i_p})=0$.  Since $\partial^w$ is linear over $R$,  it follows that $\partial^{w} \circ \partial^{w}(u)= 0$ for all $u\in\Lambda_p(V)$. The lemma is proved. 
\end{proof}

The next lemma is a generalization of \cite[Lemma~2.9]{9}~(a). 

\begin{lemma}\label{le-3.2}
Let $p\geq -1$.  Suppose the weight function $w$ is non-vanishing on $V$.  
If $v_1,v_2\in \Lambda_p(V)$ and $v_1=v_2$ mod $\mathcal{I}_p(V)$,  then $\partial^w v_1=\partial^w v_2$ mod $\mathcal{I}_{p-1}(V)$. 
\end{lemma}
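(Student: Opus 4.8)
The plan is to prove the stronger statement that $\partial^w(\mathcal{I}_p(V))\subseteq\mathcal{I}_{p-1}(V)$; applying this to $v_1-v_2\in\mathcal{I}_p(V)$ and using that $\partial^w$ is $R$-linear then yields the lemma. For $p\leq 0$ every elementary $p$-path is regular, so $\mathcal{I}_p(V)=0$ and there is nothing to prove; assume $p\geq 1$. Since $\mathcal{I}_p(V)$ is the free $R$-submodule of $\Lambda_p(V)$ generated by the non-regular elementary $p$-paths and $\partial^w$ is $R$-linear, it suffices to show $\partial^w(e_{i_0\cdots i_p})\in\mathcal{I}_{p-1}(V)$ for a single non-regular elementary $p$-path $e_{i_0\cdots i_p}$.

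Fix such a path and choose an index $k$ with $i_k=i_{k+1}$. I would expand
\[
\partial^w(e_{i_0\cdots i_p})=\sum_{q=0}^p(-1)^q w(i_q)\,e_{i_0\cdots\widehat{i_q}\cdots i_p}
\]
and split the sum into the terms with $q\notin\{k,k+1\}$ and the two terms with $q\in\{k,k+1\}$. For each $q\notin\{k,k+1\}$, deleting the vertex $i_q$ leaves the equal vertices $i_k=i_{k+1}$ in adjacent positions of the resulting $(p-1)$-path (those positions drop by one if $q<k$ and are unchanged if $q>k+1$), so $e_{i_0\cdots\widehat{i_q}\cdots i_p}$ is still non-regular and lies in $\mathcal{I}_{p-1}(V)$. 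For the two remaining terms, deleting $i_k$ and deleting $i_{k+1}$ produce the same elementary $(p-1)$-path because $i_k=i_{k+1}$, while $w(i_k)=w(i_{k+1})$ and the signs $(-1)^k$ and $(-1)^{k+1}$ are opposite; hence these two terms cancel. Therefore $\partial^w(e_{i_0\cdots i_p})$ is an $R$-linear combination of non-regular elementary $(p-1)$-paths, i.e.\ an element of $\mathcal{I}_{p-1}(V)$, which completes the argument.

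This is essentially a bookkeeping argument, parallel to the proof of \cite[Lemma~2.9]{9}~(a) for the unweighted boundary; the only points needing care are tracking how the positions of the repeated pair $i_k,i_{k+1}$ move under the face maps $d_q$ with $q\notin\{k,k+1\}$ (to be sure the image stays non-regular), and the precise sign-and-weight cancellation of the $q=k$ and $q=k+1$ terms (which holds over any commutative ring). In fact invertibility of the weights is never used here — only the identity $w(i_k)=w(i_{k+1})$ forced by $i_k=i_{k+1}$ — so the non-vanishing hypothesis could be omitted in this particular step; presumably it is retained only for uniformity with the standing assumptions of this subsection.
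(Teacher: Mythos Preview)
Your proof is correct and follows essentially the same approach as the paper: reduce to a single non-regular generator $e_{i_0\cdots i_p}$ with $i_k=i_{k+1}$, observe that the $q=k$ and $q=k+1$ terms cancel, and that the remaining faces stay non-regular. Your additional remark that the non-vanishing hypothesis is not actually used here is also correct; the paper invokes $w(i_r)\neq 0$ at this point, but membership in $\mathcal{I}_{p-1}(V)$ does not depend on whether those coefficients vanish.
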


\begin{proof}
Without loss of generality, we may assume $p\geq 1$. Similar with the proof of  \cite[Lemma~2.9]{9}~(a), it suffices to prove that if $e_{i_0\cdots i_p}=0$ mod $\mathcal{I}_p(V)$,  then  $\partial^w e_{i_0\cdots t_p}=0$ mod $\mathcal{I}_{p-1}(V)$.   Let $e_{i_0\cdots i_p}$ be non-regular such that $i_k=i_{k+1}$ for some $0\leq k\leq p-1$.  Then 
\begin{eqnarray}\label{eq-3.27}
\partial^w  e_{i_0\cdots t_p}=\sum_{0\leq r\leq p,\atop r\neq k,k+1}(-1)^r w(i_r) e_{i_0 \cdots \widehat{i_r} \cdots i_p}.  
\end{eqnarray}
Since $w$ is non-vanishing,  $w(i_r)\neq 0$ for each $0\leq r\leq p$ and  $r\neq k,k+1$. Hence  by (\ref{eq-3.27}), $\partial^w  e_{i_0\cdots t_p}$ is non-regular as well. The lemma follows. 
\end{proof}
 
 \smallskip
 
 In the remaining part of this paper,  we assume that $w$ is non-vanishing on $V$ without extra claims.   
By Lemma~\ref{le-3.2},  the weighted boundary operator (\ref{eq-3.11}) induces a weighted boundary operator on $\Lambda_{p}(V)/\mathcal{I}_{p}(V)$.  By the isomorphism (\ref{eq-2.2}),  we have an induced weighted boundary operator 
\begin{eqnarray}\label{eq-3.13}
\partial^w:  \mathcal{R}_{p}(V)\longrightarrow \mathcal{R}_{p-1}(V)
\end{eqnarray} 
as the pull-back of the weighted boundary operator on $\Lambda_{p}(V)/\mathcal{I}_{p}(V)$.  Consequently,  (\ref{eq-3.13}) gives a chain complex 
\begin{eqnarray}\label{eq-3.15}
\cdots \overset{\partial^w}{\longrightarrow} \mathcal{R}_{p}(V)\overset{\partial^w}{\longrightarrow} \mathcal{R}_{p-1}(V)\overset{\partial^w}{\longrightarrow}\cdots \overset{\partial^w}{\longrightarrow}\mathcal{R}_{0}(V) \overset{\partial^w}{\longrightarrow} R \overset{\partial^w}{\longrightarrow} 0. 
\end{eqnarray}
We consider the sub-$R$-module 
\begin{eqnarray}\label{eq-4.000}
\Omega^w_p(G)=\{v\in\mathcal{A}_p(G) : \partial^w{v}\in\mathcal{A}_{p-1}(G)\}. 
\end{eqnarray}
By a similar argument with \cite[Section~3.3]{9},  we have 
$\partial^w\Omega^w_p(G)\subset\Omega^w_{p-1}(G)$.  
Hence (\ref{eq-3.15}) has a sub-chain complex
\begin{eqnarray}\label{eq-3.16}
\cdots \overset{\partial^w}{\longrightarrow} \Omega^w_{p}(G)\overset{\partial^w}{\longrightarrow}\Omega^w_{p-1}(G)\overset{\partial^w}{\longrightarrow}\cdots \overset{\partial^w}{\longrightarrow}\Omega^w_{0}(G) \overset{\partial^w}{\longrightarrow} R \overset{\partial^w}{\longrightarrow} 0. 
\end{eqnarray}
We define the {\it weighted path homology}  of the weighted digraph  $(G,w)$   as the homology groups of the truncated version of  (\ref{eq-3.16}):
\begin{eqnarray}\label{eq-4.c3}
H_p(G,w;R)=\text{Ker} (\partial^w \mid_{\Omega^w_{p}})/\text{Im} (\partial^w\mid_{\Omega^w_{p+1}}), ~~~ p\geq 0.   
\end{eqnarray}
It follows from (\ref{eq-4.000}) and (\ref{eq-4.c3}) that
\begin{eqnarray}\label{eq-4.00}
H_p(G,w;R)=\mathcal{A}_p(G)\cap \text{Ker}(\partial^w) /  \mathcal{A}_p(G)\cap \partial^w\mathcal{A}_{p+1}(G). 
\end{eqnarray}

\smallskip

Finally,  we give an alternative construction  for the weighted path homology.  We consider the chain complex $\{R_*(V),\partial^w\}$  given in (\ref{eq-3.15}) and its graded sub-$R$-module $\mathcal{A}^w_*(G)$.    We let
\begin{eqnarray*}
\Gamma^w_p(G)= \mathcal{A}^w_p(G)+ \partial^w \mathcal{A}^w_{p+1}(G), ~~~ p\geq 0. 
\end{eqnarray*}
It can be proved that $\{\Gamma^w_*(G),\partial^w\}$ is a chain complex, whose homology is isomorphic to the weighted path homology $H_*(G,w;R)$.  In fact, we have the following proposition. 
 
 \begin{proposition}\label{le-5.a}
 Equipped with the boundary operator $\partial^w$, $\Omega^w_*(G)$ is the largest sub-chain complex of  $\{R_*(V),\partial^w\}$  that is contained in $\mathcal{A}^w_*(G)$ as  graded sub-$R$-modules;  and $\Gamma^w_*(G)$ is the smallest sub-chain complex of $\{R_*(V),\partial^w\}$ containing $\mathcal{A}^w_*(G)$  as graded sub-$R$-modules.  Moreover, the canonical inclusion $\iota: \Omega^w_*(G)\longrightarrow \Gamma^w_*(G)$ induces an isomorphism of homologies.  
 \end{proposition}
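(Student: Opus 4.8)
The plan is to treat the proposition in two stages: first establish the two extremal characterizations by elementary manipulations of sub-$R$-modules, and then deduce the homology isomorphism from a short diagram chase whose only ingredient is the observation that $\partial^{w}$ annihilates the summand $\partial^{w}\mathcal{A}^{w}_{p+1}(G)$ of $\Gamma^{w}_{p}(G)$. Everything takes place inside the chain complex $\{\mathcal{R}_{*}(V),\partial^{w}\}$ of~(\ref{eq-3.15}), which is legitimate by Lemma~\ref{pr-3.1} and Lemma~\ref{le-3.2} under the standing hypothesis that $w$ is non-vanishing.

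For the maximality of $\Omega^{w}_{*}(G)$: it is a sub-chain complex contained in $\mathcal{A}^{w}_{*}(G)$, since $v\in\Omega^{w}_{p}(G)$ gives $\partial^{w}v\in\mathcal{A}^{w}_{p-1}(G)$ and $\partial^{w}\partial^{w}v=0$ by Lemma~\ref{pr-3.1}, hence $\partial^{w}v\in\Omega^{w}_{p-1}(G)$, and $\Omega^{w}_{p}(G)\subseteq\mathcal{A}^{w}_{p}(G)$ by~(\ref{eq-4.000}). If $C_{*}$ is any sub-chain complex of $\{\mathcal{R}_{*}(V),\partial^{w}\}$ with $C_{p}\subseteq\mathcal{A}^{w}_{p}(G)$ for all $p$, then $v\in C_{p}$ forces $\partial^{w}v\in C_{p-1}\subseteq\mathcal{A}^{w}_{p-1}(G)$, so $v\in\Omega^{w}_{p}(G)$; hence $C_{*}\subseteq\Omega^{w}_{*}(G)$. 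Dually, $\Gamma^{w}_{*}(G)$ is a sub-chain complex containing $\mathcal{A}^{w}_{*}(G)$ because $\partial^{w}\Gamma^{w}_{p}(G)=\partial^{w}\mathcal{A}^{w}_{p}(G)\subseteq\mathcal{A}^{w}_{p-1}(G)+\partial^{w}\mathcal{A}^{w}_{p}(G)=\Gamma^{w}_{p-1}(G)$; and if $D_{*}$ is any sub-chain complex with $\mathcal{A}^{w}_{p}(G)\subseteq D_{p}$, then $\partial^{w}\mathcal{A}^{w}_{p+1}(G)\subseteq\partial^{w}D_{p+1}\subseteq D_{p}$, whence $\Gamma^{w}_{p}(G)=\mathcal{A}^{w}_{p}(G)+\partial^{w}\mathcal{A}^{w}_{p+1}(G)\subseteq D_{p}$. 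In particular $\{\Gamma^{w}_{*}(G),\partial^{w}\}$ is a chain complex.

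For the homology isomorphism, recall $\Omega^{w}_{*}(G)\subseteq\mathcal{A}^{w}_{*}(G)\subseteq\Gamma^{w}_{*}(G)$. To see that $\iota_{*}$ is surjective, take a cycle $z\in\Gamma^{w}_{p}(G)$ and write $z=a+\partial^{w}b$ with $a\in\mathcal{A}^{w}_{p}(G)$ and $b\in\mathcal{A}^{w}_{p+1}(G)\subseteq\Gamma^{w}_{p+1}(G)$; then $z$ and $a$ represent the same class in $H_{p}(\Gamma^{w}_{*}(G))$, while $\partial^{w}a=\partial^{w}z=0\in\mathcal{A}^{w}_{p-1}(G)$ forces $a\in\Omega^{w}_{p}(G)$, so $[z]=\iota_{*}[a]$. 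To see that $\iota_{*}$ is injective, suppose $z\in\Omega^{w}_{p}(G)$ is a cycle with $z=\partial^{w}y$ for some $y\in\Gamma^{w}_{p+1}(G)$; writing $y=a+\partial^{w}b$ with $a\in\mathcal{A}^{w}_{p+1}(G)$ and $b\in\mathcal{A}^{w}_{p+2}(G)$ gives $z=\partial^{w}y=\partial^{w}a$, and since $\partial^{w}a=z\in\Omega^{w}_{p}(G)\subseteq\mathcal{A}^{w}_{p}(G)$ we get $a\in\Omega^{w}_{p+1}(G)$, so $z$ already bounds in $\Omega^{w}_{*}(G)$. Thus $\iota_{*}$ is an isomorphism, matching the groups~(\ref{eq-4.c3}).

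I do not expect a genuine obstacle here: the whole argument is formal, the crucial point being that $\partial^{w}$ kills the second summand of $\Gamma^{w}_{p}(G)$, so any $\Gamma^{w}_{*}$-cycle, or any chain bounding a cycle of $\Omega^{w}_{*}(G)$, can be pushed down into $\mathcal{A}^{w}_{*}(G)$ and thereby automatically into $\Omega^{w}_{*}(G)$. The only things to watch are the bookkeeping at degree $-1$ in the truncation, and the trivial remark that each decomposition $z=a+\partial^{w}b$ is available by the very definition of $\Gamma^{w}_{*}(G)$; if desired one can package the homology comparison as the statement that the quotient complex $\Gamma^{w}_{*}(G)/\Omega^{w}_{*}(G)$ is acyclic, which follows from exactly the same two computations.
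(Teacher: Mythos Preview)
Your proof is correct. The paper's own proof does not argue directly: it simply invokes the general machinery of infimum and supremum chain complexes from \cite[Section~2 and Proposition~2.4]{hypergraph}, observing that $\Omega^{w}_{*}(G)$ and $\Gamma^{w}_{*}(G)$ are precisely those constructions applied to the graded submodule $\mathcal{A}^{w}_{*}(G)\subseteq\{\mathcal{R}_{*}(V),\partial^{w}\}$. Your argument, by contrast, is a self-contained elementary proof: the extremal characterizations are checked by hand, and the homology isomorphism is obtained by a short diagram chase exploiting that $\partial^{w}$ annihilates the summand $\partial^{w}\mathcal{A}^{w}_{p+1}(G)$. The two approaches are equivalent in content --- your chase is essentially what \cite[Proposition~2.4]{hypergraph} proves in the abstract --- but yours spares the reader an external reference and makes the mechanism transparent, while the paper's version is terser and places the result inside a general framework that applies to any graded submodule of a chain complex.
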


\begin{proof}
 The proof follows by applying \cite[Section~2]{hypergraph} to the graded sub-$R$-module $\mathcal{A}^w_*(G)$
 in the chain complex $\{R_*(V),\partial^w\}$.  The infimum chain complex is $\Omega^w_*(G)$, and the supremum chain complex is $\Gamma^w_*(G)$.  By \cite[Proposition~2.4]{hypergraph},  the canonical inclusion $\iota$ induces an isomorphism in homology.  
 \end{proof}
 
 By Proposition~\ref{le-5.a}, the weighted path homology of a weighted digraph $(G,w)$ can be constructed either as the homology of $\Omega_*^w(G)$ or as the homology of $\Gamma_*^w(G)$
 \begin{eqnarray*}
H_p(G,w;R)\cong \text{Ker} (\partial^w \mid_{\Gamma^w_{p}})/\text{Im} (\partial^w\mid_{\Gamma^w_{p+1}}), ~~~ p\geq 0.    
\end{eqnarray*}


\smallskip

\subsection{Morphisms of weighted digraphs and induced homomorphisms between weighted path homologies}


Let $w$ be a weight on $V$ and $w'$ be a weight on $V'$.  Then we have weighted digraphs $(G,w)$ and $(G',w')$.   
\begin{definition}\label{def-4.2}
A {\it morphism of weighted digraphs}   from $(G,w)$ to $(G',w')$    is a morphism of digraphs $f: G\longrightarrow G'$ such that for any $i\in V$, $w(i)=w'(f(i))$.   
 \end{definition}
 
 Suppose that  $f: (G,w)\longrightarrow (G',w')$ is a morphism of weighted digraphs.  The next lemma is a generalization of (\ref{eq-4.6}). 
 
 \begin{lemma}\label{le-4.1}
We have
$\partial^{'w'} f_{\#}=f_{\#}\partial^w$.  Hence (\ref{eq-4.3}) gives a chain map between the chain complexes  $\{\mathcal{R}_{*}(V),\partial^w\}$ and $\{\mathcal{R}_{*}(V'),\partial^{'w'}\}$,  with weighted boundary operators. 
 \end{lemma}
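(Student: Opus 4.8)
The plan is to verify the identity first on the full path modules $\Lambda_*$, where every map is given by an explicit formula on basis elements, and then to transport it to $\mathcal{R}_*(V)$ through the isomorphism $\mathcal{R}_p(V)\cong\Lambda_p(V)/\mathcal{I}_p(V)$ of (\ref{eq-2.2}), exactly as in the unweighted identity (\ref{eq-4.6}).

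First I would check that $\partial^{'w'}f_{\#\#\#}=f_{\#\#\#}\partial^w$ as $R$-linear maps $\Lambda_p(V)\to\Lambda_{p-1}(V')$. Since both sides are $R$-linear it suffices to compare them on an elementary $p$-path $e_{i_0\cdots i_p}$. Applying $\partial^w$ and then $f_{\#\#\#}$ produces $\sum_{q=0}^{p}(-1)^q w(i_q)\,e_{f(i_0)\cdots\widehat{f(i_q)}\cdots f(i_p)}$, while applying $f_{\#\#\#}$ and then $\partial^{'w'}$ produces $\sum_{q=0}^{p}(-1)^q w'(f(i_q))\,e_{f(i_0)\cdots\widehat{f(i_q)}\cdots f(i_p)}$; these agree term by term because a morphism of weighted digraphs satisfies $w(i)=w'(f(i))$ for every $i\in V$ by Definition~\ref{def-4.2}. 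This is the only step at which the compatibility of the two weights enters.

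Next I would descend this identity to $\mathcal{R}_*$. As observed in the text before (\ref{eq-4.2}), $f_{\#\#\#}(\mathcal{I}_p(V))\subseteq\mathcal{I}_p(V')$, so $f_{\#\#\#}$ induces $f_{\#\#}$ on the quotients and hence $f_{\#}$ on $\mathcal{R}_*$ via (\ref{eq-2.2}); and by Lemma~\ref{le-3.2}, which applies because the standing hypothesis is that $w$ (and likewise $w'$) is non-vanishing, $\partial^w(\mathcal{I}_p(V))\subseteq\mathcal{I}_{p-1}(V)$ and $\partial^{'w'}(\mathcal{I}_p(V'))\subseteq\mathcal{I}_{p-1}(V')$, so $\partial^w$ and $\partial^{'w'}$ induce the operators (\ref{eq-3.13}) on the quotients. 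Because the square of maps on $\Lambda_*$ commutes and each of its four arrows carries the relevant $\mathcal{I}$-submodule into the corresponding one, the induced square of quotient maps commutes, and transporting it along the isomorphisms (\ref{eq-2.2}) for $V$ and $V'$ gives $\partial^{'w'}f_{\#}=f_{\#}\partial^w$ on $\mathcal{R}_*(V)$.

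Finally, since $\{\mathcal{R}_*(V),\partial^w\}$ and $\{\mathcal{R}_*(V'),\partial^{'w'}\}$ are chain complexes by (\ref{eq-3.15}), this commutation says precisely that the map $f_{\#}$ of (\ref{eq-4.3}) is a chain map between them, which is the second assertion of the lemma. I do not anticipate a real obstacle: the computation on $\Lambda_*$ is a one-line term-by-term comparison once $w(i)=w'(f(i))$ is used, and the descent is the same quotient-by-$\mathcal{I}_*$ argument already run for $\partial$; the only points that need a word of justification are that $\partial^w$ and $\partial^{'w'}$ genuinely descend (this is Lemma~\ref{le-3.2}, valid by the non-vanishing assumption) and that $f_{\#\#\#}$ preserves (non-)regularity, which was noted before (\ref{eq-4.2}).
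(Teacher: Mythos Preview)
Your proof is correct and follows essentially the same approach as the paper: verify $\partial^{'w'}f_{\#\#\#}=f_{\#\#\#}\partial^w$ on elementary paths using the defining identity $w(i)=w'(f(i))$, then conclude for $f_{\#}$ on $\mathcal{R}_*$ since it is induced from $f_{\#\#\#}$ through the quotient. The paper's version is terser about the descent step, simply noting that $f_{\#}$ is canonically induced from $f_{\#\#\#}$, whereas you spell out why all four maps respect the $\mathcal{I}_*$-submodules; but the argument is the same.
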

 
 \begin{proof}
Since $f_\#$ is canonically induced from $f_{\#\#\#}$,  it suffices to prove that $f_{\#\#\#}$ in (\ref{eq-4.1})  is a chain map with respect to the weighted boundary operators $\partial^w$ and $\partial^{'w'}$.   Let $e_{i_0\cdots i_p}$ be an elementary $p$-path in $\Lambda_p(V)$.  Then
\begin{eqnarray*}
\partial^{'w'} f_{\#\#\#}(e_{i_0\cdots i_p})&=& \partial^{'w'}  e_{f(i_0) \cdots f(i_p)}\\
&=& \sum_{q=0}^{p}(-1)^q w'(f(i_q)) d_q (e_{f(i_0)\cdots f(i_p)})\\
&=& \sum_{q=0}^{p}(-1)^q w(i_q) d_q (e_{f(i_0)\cdots f(i_p)})\\
&=&f_{\#\#\#}\partial^w (e_{i_0\cdots i_p}). 
\end{eqnarray*}
 Therefore,  $f_{\#\#\#}$ in (\ref{eq-4.1})  is a chain map with respect to the weighted boundary operators $\partial^w$ and $\partial^{'w'}$. Thus $f_\#$ is a chain map as well. 
 \end{proof}

 The next lemma, which  
 is a genralization of (\ref{eq-4.a1}),  follows   from Lemma~\ref{le-4.1}.  
 
 \begin{lemma}\label{le-4.2}
 Restricted to the sub-chain complexes  $\Omega^w_*(G)$ and $\Omega^{w'}_*(G')$   
 of $\mathcal{R}_{*}(V) $ and $\mathcal{R}_{*}(V')$ respectively, (\ref{eq-4.3}) gives  an induced chain map from $\{\Omega^w_*(G),\partial^w\}$ to $\{\Omega^{w'}_*(G'),\partial^{'w'}\}$. 
 \end{lemma}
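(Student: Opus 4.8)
The plan is to reduce the statement to Lemma~\ref{le-4.1} together with the unweighted fact, recalled just before (\ref{eq-4.a1}), that $f_\#$ sends allowed elementary paths to allowed elementary paths or to zero. First I would record that $f_\#$ in (\ref{eq-4.3}) restricts to a homomorphism of graded $R$-modules $\mathcal{A}_*(G)\to\mathcal{A}_*(G')$: if $e_{i_0\cdots i_p}$ is allowed on $G$ then, since $f$ is a morphism of digraphs, each consecutive pair satisfies $f(i_{k-1})\to f(i_k)$ in $E'$ or $f(i_{k-1})=f(i_k)$; if $e_{f(i_0)\cdots f(i_p)}$ is regular then no equality occurs and the image path is allowed on $G'$, while otherwise $e_{f(i_0)\cdots f(i_p)}$ is non-regular so $f_\#(e_{i_0\cdots i_p})=0$. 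Extending linearly over $R$ gives $f_\#(\mathcal{A}_p(G))\subseteq\mathcal{A}_p(G')$ for all $p$. This argument uses only the combinatorics of $f$, not any boundary operator, so it is exactly the unweighted argument and carries over unchanged to the weighted setting.

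Next I would check that $f_\#$ carries $\Omega^w_p(G)$ into $\Omega^{w'}_p(G')$. Take $v\in\Omega^w_p(G)$, so $v\in\mathcal{A}_p(G)$ and $\partial^w v\in\mathcal{A}_{p-1}(G)$. Then $f_\#(v)\in\mathcal{A}_p(G')$ by the previous step, and Lemma~\ref{le-4.1} gives $\partial^{'w'}f_\#(v)=f_\#(\partial^w v)$, which lies in $\mathcal{A}_{p-1}(G')$ again by the previous step applied to $\partial^w v\in\mathcal{A}_{p-1}(G)$. By the definition (\ref{eq-4.000}) of $\Omega^{w'}_p(G')$ this says $f_\#(v)\in\Omega^{w'}_p(G')$. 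Finally, the identity $\partial^{'w'}f_\#=f_\#\partial^w$ from Lemma~\ref{le-4.1} restricts verbatim to these submodules, so the restricted maps assemble into a chain map $\{\Omega^w_*(G),\partial^w\}\to\{\Omega^{w'}_*(G'),\partial^{'w'}\}$, as claimed.

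I do not expect a genuine obstacle here: the only delicate point is the preservation of allowedness under $f_\#$, which is the unweighted ingredient behind (\ref{eq-4.a1}) and is independent of the choice of differential, while compatibility with the weighted differentials is handed to us by Lemma~\ref{le-4.1}. The proof is therefore a short bookkeeping argument combining these two facts.
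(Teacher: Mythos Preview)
Your argument is correct and is essentially identical to the paper's own proof: both use that $f_\#(\mathcal{A}_p(G))\subseteq\mathcal{A}_p(G')$ together with Lemma~\ref{le-4.1} to verify $f_\#(\Omega^w_p(G))\subseteq\Omega^{w'}_p(G')$, and then observe that the chain map identity from Lemma~\ref{le-4.1} restricts. The only difference is that you spell out the preservation of allowedness explicitly, whereas the paper invokes it by reference.
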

 
 \begin{proof}
 By a similar argument of \cite[(2.10) in the proof of Theorem~2.10]{10},   we have
 \begin{eqnarray}\label{eq-4.c1}
 f_\#(\Omega^w_p(G))\subseteq  \Omega^{w'}_p(G'), ~~~ p\geq 0. 
 \end{eqnarray}
 In fact, for any $v\in \Omega_p^w(G)$, we have $v\in \mathcal{A}_p(G)$ and $\partial^w v\in \mathcal{A}_{p-1}(G)$. Hence $f_\#(v)\in \mathcal{A}_p(G')$ and by Lemma~\ref{le-4.1}, 
 \begin{eqnarray*}
 \partial^{'w'}( f_\#(v)) =f_\# \partial^wv\in f_\#(\mathcal{A}_{p-1} (G))\subseteq \mathcal{A}_{p-1}(G'). 
 \end{eqnarray*}
 Consequently, $v\in \Omega^{w'}_p(G')$ and we obtain (\ref{eq-4.c1}).   
 Hence the restriction of $f_\#$ to $\Omega^w_*(G)$ is a chain map with respect to the weighted boundary operators $\partial^w$ and $\partial^{'w'}$.  
 \end{proof}
 
 \begin{remark}\label{re-4.1}
Similarly to the proof of Lemma~\ref{le-4.2}, we can obtain the following statement: {\it restricted to the sub-chain complexes  $\Gamma^w_*(G)$ and $\Gamma^{w'}_*(G')$   
 of $\mathcal{R}_{*}(V) $ and $\mathcal{R}_{*}(V')$ respectively, (\ref{eq-4.3}) gives  an induced chain map from $\{\Gamma^w_*(G),\partial^w\}$ to $\{\Gamma^{w'}_*(G'),\partial^{'w'}\}$}. 
 \end{remark}
 
 \smallskip

With the help of Lemma~\ref{le-4.2}, we have  the next theorem,  which is a generalization of (\ref{eq-4.a2}). 

\begin{theorem}\label{th-4.3}
Let $(G,w)$ and $(G',w')$ be two weighted digraphs. Let $f: (G,w)\longrightarrow (G',w')$ be a morphism of weighted digraphs. Then $f$ induces a homomorphism between the weighted path homologies
\begin{eqnarray}\label{eq-4.21}
f_*: H_p (G,w;R)\longrightarrow H_p(G',w';R), ~~~ p\geq 0. 
\end{eqnarray}
\end{theorem}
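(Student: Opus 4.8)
The plan is to deduce the theorem directly from Lemma~\ref{le-4.2} together with the elementary fact that a chain map induces a homomorphism on homology. First I would recall that, by Lemma~\ref{le-4.2}, the morphism $f_\#$ of (\ref{eq-4.3}) restricts to a degree-preserving $R$-linear map $f_\#\colon \Omega^w_p(G)\longrightarrow \Omega^{w'}_p(G')$ for every $p\geq 0$, and that (by Lemma~\ref{le-4.1}) this restriction commutes with the weighted boundary operators, i.e. $\partial^{'w'} f_\# = f_\# \partial^w$ on each $\Omega^w_p(G)$. Since passing to the truncated complexes (replacing $R$ by $0$ in the bottom degree, as in the definition of $H_p(G,w;R)$) only removes the degree $-1$ term and $f_\#$ is degree-preserving, the same identity holds for the truncated sub-chain complexes computing $H_*(G,w;R)$ and $H_*(G',w';R)$.

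Next I would run the standard argument: because $\partial^{'w'} f_\# = f_\# \partial^w$, the map $f_\#$ sends $\mathrm{Ker}(\partial^w\mid_{\Omega^w_p})$ into $\mathrm{Ker}(\partial^{'w'}\mid_{\Omega^{w'}_p})$ and sends $\mathrm{Im}(\partial^w\mid_{\Omega^w_{p+1}})$ into $\mathrm{Im}(\partial^{'w'}\mid_{\Omega^{w'}_{p+1}})$. Hence $f_\#$ descends to a well-defined $R$-module homomorphism on the quotients
\[
f_*\colon H_p(G,w;R)\longrightarrow H_p(G',w';R),\qquad p\geq 0,
\]
which is (\ref{eq-4.21}). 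Concretely, $f_*$ sends the homology class $[v]$ of a $\partial^w$-cycle $v\in\Omega^w_p(G)$ to the class $[f_\#(v)]$ of the $\partial^{'w'}$-cycle $f_\#(v)$; independence of the choice of cycle representative, the fact that $f_*$ sends the zero class to the zero class, and $R$-linearity of $f_*$ are all immediate from the preceding sentence and the $R$-linearity of $f_\#$. If desired, one can additionally record functoriality, $(\mathrm{id})_*=\mathrm{id}$ and $(g\circ f)_*=g_*\circ f_*$ for composable morphisms of weighted digraphs, which follows at once from the corresponding identities for $f_{\#\#\#}$ and $f_\#$.

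I do not expect a genuine obstacle here: all the substantive work has already been carried out in Lemma~\ref{le-4.1} (the chain-map identity) and Lemma~\ref{le-4.2} (that $f_\#$ preserves the $\partial^w$-invariant subcomplexes), so the theorem is essentially a formal corollary. The only minor point to be careful about is to invoke these lemmas on the \emph{truncated} complexes, which is automatic since $f_\#$ is degree-preserving and the truncation only affects degree $-1$. As an alternative presentation, one could run the same argument using the model $\Gamma^w_*(G)$ via Proposition~\ref{le-5.a} and Remark~\ref{re-4.1} in place of $\Omega^w_*(G)$, but the $\Omega$-model gives the most direct route.
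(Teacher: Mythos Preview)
Your proposal is correct and follows essentially the same route as the paper: invoke Lemma~\ref{le-4.2} to get a chain map $f_\#\colon\{\Omega^w_*(G),\partial^w\}\to\{\Omega^{w'}_*(G'),\partial^{'w'}\}$, then pass to homology. The paper's proof is simply a two-line version of what you wrote, and your remark about the alternative $\Gamma$-model corresponds exactly to Remark~\ref{re-4.2}.
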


\begin{proof}
By Lemma~\ref{le-4.2}, the chain map $f_\#$ from $\{\Omega^w_*(G),\partial^w\}$ to $\{\Omega^{w'}_*(G'),\partial^{'w'}\}$ induces a homomorphism in homology
\begin{eqnarray}\label{eq-4.c6}
f_*: H_p(\{\Omega^w_*(G),\partial^w\})\longrightarrow H_p(\{\Omega^{w'}_*(G'),\partial^{'w'}\}), ~~~ p\geq 0. 
\end{eqnarray}   
By the definition of the weighted path homology  (\ref{eq-4.c3}) of weighted digraphs,   the homomorphism (\ref{eq-4.c6}) gives (\ref{eq-4.21}). 
\end{proof}

\begin{remark}\label{re-4.2}
 Theorem~\ref{th-4.3}  can be alternatively proved by applying Remark~\ref{re-4.1} instead of applying Lemma~\ref{le-4.2}.  
\end{remark}

\smallskip
 
 Finally, as  supplements to Theorem~\ref{th-4.3}, we list some properties of the induced homomorphisms between the weighted path homologies:  
 
 \begin{enumerate}[(I).]
 \item
 the identity morphism $\text{id}$ from a weighted digraph $(G,w)$ to itself   induces the identity map $\text{id}_*$ on  $H_*(G,w;R)$;
 
 \item
 given two morphisms $f: (G,w)\longrightarrow (G',w')$ and $g: (G',w')\longrightarrow (G'',w'')$ of weighted digraphs, 
 the induced homomorphism $(g\circ f)_*$ of the composition $g\circ f$ is the composition of  induced homomorphisms $g_*\circ f_*$. 
 \end{enumerate}

 \begin{proof}[Proof of (I) and (II)]
  The identity map $\text{id}$ of the  weighted digraph $(G,w)$, where $G=(V,E)$, induces the identity map  $\text{id}_\#$  on the chain complex $\{R_*(V),\partial^w\}$.  And the restriction of $\text{id}_\#$ on the sub-chain complex $\Omega_*^w(G)$ induces the identity map on the weighted path homology. We obtain (I).

Suppose $G'=(V',E')$ and $G''=(V'',E'')$.  The composition $g\circ f$ induces a chain map 
 \begin{eqnarray*}
(g\circ f)_{\#\#\#}:  \{\Lambda_*(V),\partial^w\}\overset{f_{\#\#\#}}{\longrightarrow} \{\Lambda_*(V'),\partial^{'w'}\} \overset{g_{\#\#\#}}{\longrightarrow} \{\Lambda_*(V''),\partial^{w''}\}. 
 \end{eqnarray*}
And   $(g\circ f)_{\#\#\#}$ induces  a chain map
\begin{eqnarray*}
(g\circ f)_\#: \{\mathcal{R}_{*}(V),\partial^w\}\overset{f_{\#}}{\longrightarrow} \{\mathcal{R}_{*}(V'),\partial^{'w'}\} \overset{g_{\#}}{\longrightarrow} \{\mathcal{R}_{*}(V''),\partial^{w''}\}, 
\end{eqnarray*}
which gives the restriction to  sub-chain complexes   
\begin{eqnarray}\label{eq-4.q1}
(g\circ f)_\#: \{\Omega_{*}(G),\partial^w\}\overset{f_{\#}}{\longrightarrow} \{\Omega_{*}(G'),\partial^{'w'}\} \overset{g_{\#}}{\longrightarrow} \{\Omega_{*}(G''),\partial^{w''}\}.  
\end{eqnarray}
The chain map $(g\circ f)_{\#}$ in (\ref{eq-4.q1}) induces a homomorphism in homology
\begin{eqnarray*}
(g\circ f)_*:H_p(G,w;R) \overset{f_*}{\longrightarrow} H_p(G',w';R) \overset{g_*}{\longrightarrow} H_p(G'',w'';R). 
\end{eqnarray*}
Thus we obtain (II). 
 \end{proof}

 \section{Persistent weighted path homology}\label{s444}
 
 In this section,  we discuss the persistent weighted path homology of weighted digraphs and morphisms of weighted digraphs, with ring coefficients.  We prove an isomorphism of the persistent weighted path homology in  Theorem~\ref{th-5.z}. In addition,  if the coefficients of the homology are in a field, then we also prove that the persistent weighted homology does not depend on the weights, in Theorem~\ref{th-5.w}.  Thus with field coefficients, the persistent weighted path homology is the same as the usual persistent path homology studied in \cite{11}.  Moreover, we give some examples in Subsection~\ref{ss4.3}.

 \smallskip
 
 \subsection{The inclusion of persistence complexes induces the identity map of persistent weighted homology}

 Let  $n=1,2,\ldots$.   
 We consider a finite or countable sequence of weighted digraphs $(G_n,w_n)$, together with a sequence of morphisms of weighted digraphs $f_n: (G_n,w_n)\longrightarrow (G_{n+1},w_{n+1})$.  Then by Lemma~\ref{le-4.2},  we have a persistence complex (we refer to \cite[Defiition~3.1]{[25-ZC05]} for the definition of persistence complexes)
 
 \begin{eqnarray*}
 \xymatrix{
\Omega_0^{w_1}(G_1)\ar[rr]^{{(f_1)}_\#} &&
    \Omega_0^{w_2}(G_2)\ar[rr]^{{(f_2)}_\#}&&
    \cdots 
     \\
     \\
 \Omega_1^{w_1}(G_1)\ar[rr]^{{(f_1)}_\#}\ar[uu]^{\partial^{w_1}}&&
    \Omega_1^{w_2}(G_2)\ar[rr]^{{(f_2)}_\#}\ar[uu]^{\partial^{w_2}}&&
    \cdots 
      \\
      \\
       \Omega_2^{w_1}(G_1)\ar[rr]^{{(f_1)}_\#}\ar[uu]^{\partial^{w_1}}&&
    \Omega_2^{w_2}(G_2)\ar[rr]^{{(f_2)}_\#}\ar[uu]^{\partial^{w_2}}&&
    \cdots  
      \\
      \\
       \cdots \ar[uu]^{\partial^{w_1}}&&
    \cdots \ar[uu]^{\partial^{w_2}}&&
 }
 \end{eqnarray*} 
  And by Remark~\ref{re-4.1}, we have a persistence complex
\begin{eqnarray*}
 \xymatrix{
\Gamma_0^{w_1}(G_1)\ar[rr]^{{(f_1)}_\#} &&
    \Gamma_0^{w_2}(G_2)\ar[rr]^{{(f_2)}_\#}&&
    \cdots 
     \\
     \\
 \Gamma_1^{w_1}(G_1)\ar[rr]^{{(f_1)}_\#}\ar[uu]^{\partial^{w_1}}&&
    \Gamma_1^{w_2}(G_2)\ar[rr]^{{(f_2)}_\#}\ar[uu]^{\partial^{w_2}}&&
    \cdots 
      \\
      \\
       \Gamma_2^{w_1}(G_1)\ar[rr]^{{(f_1)}_\#}\ar[uu]^{\partial^{w_1}}&&
    \Gamma_2^{w_2}(G_2)\ar[rr]^{{(f_2)}_\#}\ar[uu]^{\partial^{w_2}}&&
    \cdots  
      \\
      \\
       \cdots \ar[uu]^{\partial^{w_1}}&&
    \cdots \ar[uu]^{\partial^{w_2}}&&
 }
 \end{eqnarray*} 
 Each entry $\Omega_p^{w_n}(G_n)$ in the first persistence complex is a sub-$R$-module of the corresponding entry $\Gamma_p^{w_n}(G_n)$ in the second persistence complex, for any $p\geq 0$ and any  $n\geq 1$.  Hence  by  embedding each $\Omega_p^{w_n}(G_n)$ as a subspace of  $\Gamma_p^{w_n}(G_n)$ via the canonical inclusion $\iota_n$, we get a canonical inclusion $(\iota_1,\iota_2,\ldots)$ from the first persistence complex into the second persistence complex.

 By Theorem~\ref{th-4.3} and Remark~\ref{re-4.2},  for each $p\geq 0$, each  of the above persistence complexes induces  a persistence $R$-module  (we refer to \cite[Defiition~3.2]{[25-ZC05]} for the definition of persistence modules)
 \begin{eqnarray}
 H_p(G_1,w_1;R)\overset{(f_1)_*}{\longrightarrow}     \cdots  \overset{(f_{n-1})_*}{\longrightarrow} H_p(G_n,w_n;R)\overset{(f_n)_*}{\longrightarrow}H_p(G_{n+1},w_{n+1};R) \overset{(f_{n+1})_*}{\longrightarrow} \cdots
 \label{eq-5.1}
 \end{eqnarray}
 We call the persistence $R$-module (\ref{eq-5.1}) the persistent weighted path homology.  We notice that  (\ref{eq-5.1})  is of finite type since $V$ is a finite set.  

 \begin{theorem}\label{th-5.z}
 The canonical inclusion $(\iota_1,\iota_2,\ldots)$ of the first persistence complex $\Omega^w_*(G_*)$ into the second persistence complex $\Gamma^w_*(G_*)$  induces the identity map  from the persistence $R$-module (\ref{eq-5.1}) to itself.  
 \end{theorem}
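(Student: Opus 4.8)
The plan is to check that the family of canonical inclusions $\iota=(\iota_1,\iota_2,\ldots)$ is a morphism of persistence complexes, so that for each $p\geq 0$ it induces a morphism of persistence $R$-modules from $H_p(\Omega^w_*(G_*))$ to $H_p(\Gamma^w_*(G_*))$, and then to identify these two persistence $R$-modules by means of the isomorphisms supplied by Proposition~\ref{le-5.a}, under which the induced morphism becomes the identity.

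First I would verify that $\iota$ commutes both with the vertical differentials and with the horizontal structure maps of the two persistence complexes. Compatibility with the differentials is immediate: for each $n$, the operator $\partial^{w_n}$ on $\Omega^{w_n}_*(G_n)$ and the operator $\partial^{w_n}$ on $\Gamma^{w_n}_*(G_n)$ are both restrictions of the single weighted boundary operator~(\ref{eq-3.13}) on $\mathcal{R}_*(V_n)$ (with $V_n$ the vertex set of $G_n$), hence $\partial^{w_n}\iota_n=\iota_n\partial^{w_n}$. Compatibility with the horizontal maps is equally direct: by Lemma~\ref{le-4.2} and Remark~\ref{re-4.1}, the chain maps $(f_n)_\#$ on $\Omega^{w_n}_*(G_n)$ and on $\Gamma^{w_n}_*(G_n)$ are both restrictions of the same chain map $(f_n)_\#$ furnished by Lemma~\ref{le-4.1}, so they agree on the sub-$R$-module $\Omega^{w_n}_p(G_n)\subseteq\Gamma^{w_n}_p(G_n)$; therefore $\iota_{n+1}\circ(f_n)_\#=(f_n)_\#\circ\iota_n$. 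Consequently $\iota$ is a morphism of persistence complexes and, by Theorem~\ref{th-4.3} together with Remark~\ref{re-4.2}, it induces a morphism $\iota_*$ of persistence $R$-modules.

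It remains to identify $\iota_*$. Its component at level $n$ is the homomorphism induced on homology by the inclusion $\iota_n\colon\Omega^{w_n}_*(G_n)\hookrightarrow\Gamma^{w_n}_*(G_n)$, which by Proposition~\ref{le-5.a} is an isomorphism; call it $\psi_n$. Since each $\psi_n$ equals $(\iota_n)_*$ and $\iota$ is compatible with the maps $(f_n)_\#$, the family $\psi=(\psi_1,\psi_2,\ldots)$ is a morphism of persistence $R$-modules, and being a componentwise isomorphism it is an isomorphism of persistence $R$-modules identifying $H_p(\Gamma^w_*(G_*))$ with the persistence module~(\ref{eq-5.1}), which by definition is $H_p(\Omega^w_*(G_*))$. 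Under this identification $\iota_*$ is carried to $\psi^{-1}\circ\iota_*=\psi^{-1}\circ\psi=\mathrm{id}$, which is the assertion of the theorem. The argument is purely formal; the only point demanding care is the bookkeeping of functoriality, namely checking that the isomorphism delivered by Proposition~\ref{le-5.a} is literally the map that $\iota_n$ induces on homology, and that the resulting family of isomorphisms commutes with the horizontal maps so that it is a morphism --- hence an isomorphism --- of persistence modules. Once this is in place no further computation is required.
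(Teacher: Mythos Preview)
Your proof is correct and follows essentially the same approach as the paper's. Both arguments hinge on the commutative square
\[
\xymatrix{
\Omega^{w_n}_*(G_n)\ar[r]^{(f_n)_\#}\ar[d]_{\iota_n} & \Omega^{w_{n+1}}_*(G_{n+1})\ar[d]^{\iota_{n+1}}\\
\Gamma^{w_n}_*(G_n)\ar[r]^{(f_n)_\#} & \Gamma^{w_{n+1}}_*(G_{n+1})
}
\]
together with Proposition~\ref{le-5.a}; your write-up is slightly more explicit about packaging this as a morphism of persistence complexes and then identifying the two persistence modules via $\psi=((\iota_n)_*)$, but the substance is identical.
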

 
 \begin{proof}
By Proposition~\ref{le-5.a},  the canonical inclusion 
\begin{eqnarray*}
\iota_n: \Omega^{w_n}_p(G_n)\longrightarrow \Gamma^{w_n}_p(G_n), ~~~ p\geq 0,
\end{eqnarray*}
 $n=1,2,\ldots$, induces an isomorphism of the weighted path homologies 
\begin{eqnarray*}
(\iota_n)_*: H_p(G_n,w_n;R)\overset{\cong}{\longrightarrow}  H_p(G_n,w_n;R), ~~~p\geq 0. 
\end{eqnarray*}
 Thus to prove Theorem~\ref{th-5.z},  it only suffices to prove that for any morphism of weighted digraphs
 \begin{eqnarray*}
 f_n: (G_n,w_n)\longrightarrow (G_{n+1},w_{n+1})
 \end{eqnarray*}
 where $G_n=(V_n,E_n)$ and $G_{n+1}=(V_{n+1},E_{n+1})$,  the restriction of the chain map 
 \begin{eqnarray}\label{eq-5.e1}
 (f_n)_{\#}: \{\mathcal{R}_{*}(V_n), \partial^{w_{n}}\}\longrightarrow \{\mathcal{R}_{*}(V_{n+1}), \partial^{w_{n+1}}\}
 \end{eqnarray}
 to the sub-chain complex $\Omega^{w_n}_*(G_n)$, and the restriction of (\ref{eq-5.e1}) to the sub-chain complex $\Gamma^{w_n}_*(G_n)$, induce the same homomorphism in  weighted path homologies   
 \begin{eqnarray}\label{eq-5.c6}
 (f_n)_*:   H_p(G_n,w_n;R){\longrightarrow}  H_p(G_{n+1},w_{n+1};R), ~~~p\geq 0. 
 \end{eqnarray}
We have a commutative diagram of chain complexes  and chain maps 
 \begin{eqnarray*}
 \xymatrix{
\Omega^{w_n}_*(G_n)  \ar[rrr]^{(f_n)_\#\mid_{\Omega^{w_n}_*(G_n)}} \ar[dd]^{\iota_n} &&&  \Omega^{w_{n+1}}_*(G_{n+1})\ar[dd]^{\iota_{n+1}}\\
\\
\Gamma^{w_n}_*(G_n)  \ar[rrr]^{(f_n)_\#\mid_{\Gamma^{w_n}_*(G_n)}}  &&&  \Gamma^{w_{n+1}}_*(G_{n+1}). 
 }
 \end{eqnarray*}
By taking  the  homologies in the  commutative diagram, since both $(\iota_n)_*$ and $(\iota_{n+1})_*$ are isomorphisms in homologies,  it follows that 
\begin{eqnarray*}
(f_n)_\#\mid_{\Omega^{w_n}_*(G_n)} ~~~ \text{ and  } ~~~  (f_n)_\#\mid_{\Gamma^{w_n}_*(G_n)}
\end{eqnarray*}
 induce the same homomorphism (\ref{eq-5.c6})  in weighted path homologies.  The proof follows.  
 \end{proof}
 
 \smallskip
 
 A particular family of sequences of morphisms of weighted digraphs is the {\it filtrations}.  Precisely,  a filtration of weighted digraphs is a sequence  of graphs 
 \begin{eqnarray}\label{eq-4.n1}
 G_1\subseteq G_2\subseteq \cdots \subseteq G_n\subseteq G_{n+1}\subseteq \cdots 
 \end{eqnarray}  
 where for any $n=1,2,\cdots$, $G_n=(V_n,E_n)$,    
 together with the non-vanishing functions 
 \begin{eqnarray*}
 w_\infty: \cup_{n=1}^\infty V_n \longrightarrow R
 \end{eqnarray*}
 and the induced weights
 \begin{eqnarray*}
 w_n=w_\infty\mid _{V_n}: V_n\longrightarrow R.
 \end{eqnarray*}  
 Here in (\ref{eq-4.n1}),  the subset relation  $G_n\subseteq G_{n+1}$ means  both  $V_n\subseteq V_{n+1}$ and $E_{n}\subseteq E_{n+1}$.  As particular cases, both of the persistence module (\ref{eq-5.1}) and Theorem~\ref{th-5.z} hold for  filtrations of weighted digraphs.

  \smallskip

 
 \subsection{Persistent homology with field coefficients is independent on weights}

Suppose $R$ is  a field $\mathbb{F}$.  Let  $n=1,2,\ldots$.   
Let $(G_n,w_n)$ be a finite or countable sequence of weighted digraphs,  together with a sequence of morphisms of weighted digraphs $f_n: (G_n,w_n)\longrightarrow (G_{n+1},w_{n+1})$.  We have the next theorem. 

\begin{theorem}\label{th-5.w}
The  persistent  weighted path homology 
 \begin{eqnarray}
 H_p(G_1,w_1;\mathbb{F})\overset{(f_1)_*}{\longrightarrow}     \cdots  \overset{(f_{n-1})_*}{\longrightarrow} H_p(G_n,w_n;\mathbb{F})\overset{(f_n)_*}{\longrightarrow}H_p(G_{n+1},w_{n+1};\mathbb{F}) \overset{(f_{n+1})_*}{\longrightarrow} \cdots
 \label{eq-5.11}
 \end{eqnarray}
 is independent on the choices  of the weights $w_n$, $n=1,2,\ldots$.  
\end{theorem}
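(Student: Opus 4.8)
The plan is to prove the stronger assertion that the persistence module (\ref{eq-5.11}) is isomorphic, as a persistence $\mathbb{F}$-module, to the ordinary persistent path homology $\{H_p(G_n;\mathbb{F}),(f_n)_*\}$ of the underlying sequence of digraphs studied in \cite{11}; since the target involves no weights at all, independence of the $w_n$ is immediate. The isomorphism will come from a single degreewise rescaling of elementary paths that conjugates $\partial^w$ into the unweighted $\partial$.

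First I would fix one weighted digraph $(G,w)$ with $G=(V,E)$ and define, for each $p\geq 0$, the $\mathbb{F}$-linear map $\phi^w_p\colon \mathcal{R}_p(V)\to\mathcal{R}_p(V)$ on the basis of regular elementary $p$-paths by $\phi^w_p(e_{i_0\cdots i_p})=w(i_0)w(i_1)\cdots w(i_p)\,e_{i_0\cdots i_p}$. Because $w$ is non-vanishing and $\mathbb{F}$ is a field, each scalar $w(i_k)$ is invertible, so every $\phi^w_p$ is an isomorphism of $\mathbb{F}$-vector spaces. The one identity that has to be checked is that $\phi^w$ intertwines the two boundary operators, $\partial\circ\phi^w_p=\phi^w_{p-1}\circ\partial^w$ on $\mathcal{R}_p(V)$ for $p\geq 1$: the $q$-th term of $\partial^w(e_{i_0\cdots i_p})$ is $(-1)^q w(i_q)\,e_{i_0\cdots\widehat{i_q}\cdots i_p}$, and applying $\phi^w_{p-1}$ multiplies it by $\prod_{k\neq q}w(i_k)$, so the resulting coefficient is $(-1)^q\prod_{k=0}^p w(i_k)$, which is precisely the $q$-th coefficient appearing in $\partial\bigl(\phi^w_p(e_{i_0\cdots i_p})\bigr)$. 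Hence $\phi^w$ is a chain isomorphism from $\{\mathcal{R}_*(V),\partial^w\}$ onto $\{\mathcal{R}_*(V),\partial\}$.

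Next, since $\phi^w_p$ acts diagonally on elementary paths it carries the submodule $\mathcal{A}_p(G)$ of allowed $p$-paths isomorphically onto itself; combining this with the intertwining relation, if $v\in\mathcal{A}_p(G)$ and $\partial^w v\in\mathcal{A}_{p-1}(G)$ then $\phi^w_p(v)\in\mathcal{A}_p(G)$ and $\partial\phi^w_p(v)=\phi^w_{p-1}(\partial^w v)\in\mathcal{A}_{p-1}(G)$, and conversely via $(\phi^w_p)^{-1}$. Thus $\phi^w$ restricts to an isomorphism of chain complexes $\{\Omega^w_*(G),\partial^w\}\xrightarrow{\ \cong\ }\{\Omega_*(G),\partial\}$ with the ordinary complex of \cite{9}, and in particular $H_p(G,w;\mathbb{F})\cong H_p(G;\mathbb{F})$. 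To make this compatible with the structure maps, I would then verify that for a morphism of weighted digraphs $f\colon (G,w)\to(G',w')$ (so $w(i)=w'(f(i))$ for all $i\in V$) the square with horizontal arrows $f_\#$ and vertical arrows $\phi^w,\phi^{w'}$ commutes: evaluated on $e_{i_0\cdots i_p}$, both composites are $0$ when $e_{f(i_0)\cdots f(i_p)}$ is non-regular, and otherwise both equal $\bigl(\prod_{k=0}^p w(i_k)\bigr)e_{f(i_0)\cdots f(i_p)}=\bigl(\prod_{k=0}^p w'(f(i_k))\bigr)e_{f(i_0)\cdots f(i_p)}$, the final equality being exactly the weighted-morphism condition of Definition~\ref{def-4.2}.

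Applying the last observation to every $f_n$ shows that $(\phi^{w_1},\phi^{w_2},\ldots)$ is an isomorphism of persistence complexes from $\{\Omega^{w_n}_*(G_n),(f_n)_\#,\partial^{w_n}\}$ onto $\{\Omega_*(G_n),(f_n)_\#,\partial\}$, hence induces an isomorphism of persistence $\mathbb{F}$-modules from (\ref{eq-5.11}) onto $\{H_p(G_n;\mathbb{F}),(f_n)_*\}$, which is manifestly independent of the weights; this proves the theorem. The only step that is not pure bookkeeping is the verification $\partial\circ\phi^w=\phi^w\circ\partial^w$, i.e.\ recognizing that the product of all vertex weights along a path is the correct rescaling factor; once that is established, the diagonal action handles compatibility with the regular, allowed, and $\Omega$-conditions, and the weighted-morphism hypothesis handles naturality.
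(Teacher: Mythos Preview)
Your proof is correct and is, in fact, a cleaner packaging than the paper's own argument. The paper proceeds via two separate auxiliary lemmas: it builds one isomorphism $\varphi\colon \text{Ker}(\partial_p)\to\text{Ker}(\partial^w_p)$ by \emph{dividing} each basis coefficient by the product of vertex weights, and a second isomorphism $\psi\colon \text{Im}(\partial_{p+1})\to\text{Im}(\partial^w_{p+1})$ by sending $\partial(e_{i_0\cdots i_{p+1}})\mapsto\partial^w(e_{i_0\cdots i_{p+1}})$; it then restricts each to the allowed-path subspaces and checks two separate commutative squares with $(f_n)_\#$ before passing to the quotient homology. Your single diagonal rescaling $\phi^w$ is essentially the global inverse of the paper's $\varphi$, but by defining it on all of $\mathcal{R}_*(V)$ and verifying the chain-map identity $\partial\circ\phi^w=\phi^w\circ\partial^w$ once, you get the kernel and image statements simultaneously, the preservation of $\mathcal{A}_*(G)$ and $\Omega_*$ for free from diagonality, and naturality with $f_\#$ in one line from Definition~\ref{def-4.2}. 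The payoff is a shorter argument with fewer moving parts; the paper's approach, by contrast, stays closer to the template of \cite{27} that it cites.
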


We first prove some lemmas before proving Theorem~\ref{th-5.w}. 

\begin{lemma}\label{le-w.a}
Let $(G,w)$ be a weighted digraph with $G=(V,E)$.  Let $p\geq 0$.  Let $\partial_p: \mathcal{R}_p(V) \rightarrow \mathcal{R}_{p-1}(V)$ be the usual  boundary operator defined in (\ref{eq-2.3}) and $\partial^w_p:  \mathcal{R}_p^w(V) \rightarrow \mathcal{R}_{p-1}^w(V)$ be  the weighted boundary operator defined in (\ref{eq-3.13}), both with coefficients in $\mathbb{F}$.  Then as vector spaces over $\mathbb{F}$, 
\begin{enumerate}[(i).]
\item
 $\text{Ker}(\partial_p) \cong \text{Ker}(\partial^w_p)$;
 \item
  $\text{Im}(\partial_{p+1}) \cong \text{Im}(\partial^w_{p+1})$. 
 \end{enumerate} 
\end{lemma}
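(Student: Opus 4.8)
The plan is to exhibit an explicit chain isomorphism between the complexes $\{\mathcal{R}_*(V),\partial\}$ and $\{\mathcal{R}_*(V),\partial^w\}$ obtained by rescaling the basis of regular elementary paths; assertions (i) and (ii) then follow by restricting this isomorphism to kernels and images. The field hypothesis enters precisely here: since $\mathbb{F}$ is a field and $w$ is non-vanishing, each value $w(i)$ is a unit, so the rescaling is invertible. Explicitly, for $p\ge 0$ I would define the $\mathbb{F}$-linear map $D_p\colon\mathcal{R}_p(V)\to\mathcal{R}_p(V)$ on the basis of regular elementary $p$-paths by
\[
D_p(e_{i_0\cdots i_p})=\Big(\prod_{k=0}^p w(i_k)\Big)e_{i_0\cdots i_p},
\]
together with $D_{-1}=\mathrm{id}$ on $\mathcal{R}_{-1}(V)=\mathbb{F}$; it is invertible, with $D_p^{-1}$ the rescaling by $\big(\prod_{k}w(i_k)\big)^{-1}$. (One may equivalently define $D_p$ on all of $\Lambda_p(V)$ by the same formula and observe that it preserves $\mathcal{I}_p(V)$, since it carries each non-regular elementary path to a scalar multiple of itself, hence descends to $\mathcal{R}_p(V)$ through the isomorphism (\ref{eq-2.2}); this also makes it transparent that faces which become non-regular cause no trouble.)

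The one computation to carry out is the identity
\[
\partial^w_p=D_{p-1}^{-1}\circ\partial_p\circ D_p,\qquad p\ge 0.
\]
Evaluating the right-hand side on $e_{i_0\cdots i_p}$ and using $\partial_p(e_{i_0\cdots i_p})=\sum_q(-1)^q e_{i_0\cdots\widehat{i_q}\cdots i_p}$, the scaling factors combine through the telescoping relation $\prod_{k=0}^p w(i_k)=w(i_q)\prod_{k\neq q}w(i_k)$ to produce $\sum_q(-1)^q w(i_q)e_{i_0\cdots\widehat{i_q}\cdots i_p}$, which is exactly $\partial^w_p(e_{i_0\cdots i_p})$; faces that are non-regular represent $0$ in $\mathcal{R}_{p-1}(V)$ on both sides, in accordance with Lemma~\ref{le-3.2}, so the comparison is legitimate. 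Thus $D_*$ is a chain isomorphism from $\{\mathcal{R}_*(V),\partial^w\}$ onto $\{\mathcal{R}_*(V),\partial\}$.

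The remainder is formal. Since $D_{p-1}$ is bijective, $\partial^w_p v=0$ if and only if $\partial_p(D_pv)=0$, so $D_p$ restricts to an isomorphism $\text{Ker}(\partial^w_p)\cong\text{Ker}(\partial_p)$, which is (i). Since $D_{p+1}$ is bijective, $\text{Im}(\partial^w_{p+1})=D_p^{-1}\partial_{p+1}D_{p+1}(\mathcal{R}_{p+1})=D_p^{-1}\big(\text{Im}(\partial_{p+1})\big)$, so $D_p$ restricts to an isomorphism $\text{Im}(\partial^w_{p+1})\cong\text{Im}(\partial_{p+1})$, which is (ii).

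I do not expect a genuine obstacle here; the only points needing care are getting the scaling factor right — it must be the product of the weights over \emph{all} vertices of the path, so that the telescoping identity leaves precisely the factor $w(i_q)$ appearing in $\partial^w$ — and checking that $D_p$ is well defined on the quotient module $\mathcal{R}_p(V)\cong\Lambda_p(V)/\mathcal{I}_p(V)$, which is cleanest to settle by either using the free basis of regular paths directly or defining $D_p$ on $\Lambda_p(V)$ and passing to the quotient. It is worth emphasizing that invertibility of the weights is used in an essential way: over a ring such as $\mathbb{Z}$ the map $D_p$ need not be invertible, consistent with the genuine weight-dependence exhibited later in Subsection~\ref{ss4.3}. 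Finally, since $D_*$ is moreover natural with respect to morphisms of weighted digraphs (because such a morphism $f$ satisfies $w=w'\circ f$), the same map will be the key input for the persistence-level statement of Theorem~\ref{th-5.w}.
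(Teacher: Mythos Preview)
Your proof is correct and uses the same core idea as the paper: rescaling each regular elementary path $e_{i_0\cdots i_p}$ by the product $\prod_k w(i_k)$ of the weights of its vertices. The paper's map $\varphi$ in (\ref{eq-5.q.3}) is precisely your $D_p^{-1}$, so part~(i) is identical. The only difference is in part~(ii): the paper defines a separate map $\psi$ by $\psi(\partial_{p+1}e_{i_0\cdots i_{p+1}})=\partial^w_{p+1}e_{i_0\cdots i_{p+1}}$ and must argue (by reference to \cite{27}) that this is well-defined, whereas you deduce (ii) directly from the single chain-isomorphism identity $\partial^w_p=D_{p-1}^{-1}\partial_p D_p$, which makes well-definedness automatic. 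Your packaging is slightly cleaner and also makes the naturality with respect to morphisms of weighted digraphs (needed for Theorem~\ref{th-5.w}) more transparent, since $D_*$ visibly intertwines $f_\#$ on both complexes when $w=w'\circ f$.
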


\begin{proof}
The proof of (i) is similar with \cite[Lemma~5.1]{27}.  Let $u\in \text{Ker}(\partial_p)$.  Then $u$ is an element in $\mathcal{R}_p(V)$ such that $\partial_p u=0$.  Suppose
\begin{eqnarray}\label{eq-5.q.1}
u=\sum_{k=1}^m a_k  e_{i_0^k\cdots  i_p^k}
\end{eqnarray}
where for each $1\leq k\leq m$, $a_k\in \mathbb{F}$, $i_0^k, \ldots, i_p^k$ are (not necessarily distinct) vertices   in  $V$, and $e_{i_0^k\cdots  i_p^k}$ is an elementary $p$-path on $V$.  We construct a map
\begin{eqnarray}\label{eq-5.q.3}
\varphi:  \text{Ker}(\partial_p)\longrightarrow \text{Ker}(\partial^w_p)
\end{eqnarray}
by setting 
\begin{eqnarray}\label{eq-5.q.2}
\varphi (u)=  \sum_{k=1}^{m}\frac{a_k}{w(i_0^k)\cdots w(i_p^k)} e_{i_0^k\cdots  i_p^k}. 
\end{eqnarray}
By a similar calculation with the proof of \cite[Lemma~5.1]{27},  it follows that $\varphi$ is well-defined, and  is an  isomorphism of vector spaces over $\mathbb{F}$.  Hence we obtain (i).   

The proof of (ii) is similar with \cite[Lemma~5.2]{27}.  We construct a   map
\begin{eqnarray}\label{eq-5.q.5}
\psi: \text{Im}(\partial_{p+1})\longrightarrow \text{Im}(\partial^w_{p+1})
\end{eqnarray}
by setting 
\begin{eqnarray*}
\psi(\partial_{p+1} (e_{i_0\cdots i_{p+1}}))=\partial_{p+1}^w (e_{i_0 \cdots i_{p+1}})
\end{eqnarray*}
for any elementary $(p+1)$-path $e_{i_0\cdots i_{p+1}}$ on $V$,  
and extending linearly over $\mathbb{F}$.   By a similar argument with  the proof of \cite[Lemma~5.2]{27},  it follows that  $\varphi$   is a linear isomorphism of vector spaces over $\mathbb{F}$.  Hence we obtain (ii).  
\end{proof}

\begin{lemma}\label{le-w.b}
Let $(G,w)$ be a weighted digraph with $G=(V,E)$.  Let $p\geq 0$.  Let $\partial_p: \mathcal{R}_p(V) \rightarrow \mathcal{R}_{p-1}(V)$ be the usual  boundary operator defined in (\ref{eq-2.3}) and $\partial^w_p:  \mathcal{R}_p^w(V) \rightarrow \mathcal{R}_{p-1}^w(V)$ be  the weighted boundary operator defined in (\ref{eq-3.13}), both with coefficients in $\mathbb{F}$.  Then as vector spaces over $\mathbb{F}$, 
\begin{enumerate}[(i).]

\item
$\mathcal{A}_p(G)\cap \text{Ker}(\partial_{p})\cong  \mathcal{A}_p(G)\cap\text{Ker}(\partial^w_{p})$;

\item
$\mathcal{A}_p(G)\cap \partial_{p+1}\mathcal{A}_{p+1}(G)\cong  \mathcal{A}_p(G)\cap \partial^w_{p+1}\mathcal{A}_{p+1}(G)$.   
\end{enumerate}
\end{lemma}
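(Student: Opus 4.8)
The plan is to build on the isomorphisms $\varphi$ and $\psi$ already constructed in Lemma~\ref{le-w.a} and show that they restrict to isomorphisms on the relevant subspaces. The key observation is that the map $\varphi: \mathcal{R}_p(V)\to\mathcal{R}_p(V)$, which sends a basis element $e_{i_0\cdots i_p}$ to $\frac{1}{w(i_0)\cdots w(i_p)}e_{i_0\cdots i_p}$, is a "diagonal" rescaling with respect to the standard basis of regular elementary $p$-paths. Since $w$ is non-vanishing and $\mathbb{F}$ is a field, each scalar $w(i_0)\cdots w(i_p)$ is invertible, so this rescaling is a well-defined linear automorphism of the whole space $\mathcal{R}_p(V)$ (not merely of $\mathrm{Ker}(\partial_p)$), and crucially it preserves the span of any set of basis elements. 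In particular it carries $\mathcal{A}_p(G)$ — which is by definition the span of the allowed elementary $p$-paths — bijectively onto itself. This is the mechanism that will let the subspace conditions pass through.

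For part (i), I would argue as follows. We already know from Lemma~\ref{le-w.a}(i) that $\varphi$ maps $\mathrm{Ker}(\partial_p)$ isomorphically onto $\mathrm{Ker}(\partial^w_p)$ (after identifying $\mathcal{R}_p^w(V)$ with $\mathcal{R}_p(V)$ as the same underlying vector space). Restricting the automorphism $\varphi$ of $\mathcal{R}_p(V)$ to the subspace $\mathcal{A}_p(G)\cap\mathrm{Ker}(\partial_p)$: since $\varphi(\mathcal{A}_p(G))=\mathcal{A}_p(G)$ (the diagonal scaling permutes, up to scalars, the allowed basis paths) and $\varphi(\mathrm{Ker}(\partial_p))=\mathrm{Ker}(\partial^w_p)$, we get $\varphi\big(\mathcal{A}_p(G)\cap\mathrm{Ker}(\partial_p)\big)=\mathcal{A}_p(G)\cap\mathrm{Ker}(\partial^w_p)$. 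As $\varphi$ is injective, its restriction to this intersection is an isomorphism onto its image, giving (i). For part (ii), I would similarly use the map $\psi$ from the proof of Lemma~\ref{le-w.a}(ii), which satisfies $\psi(\partial_{p+1}v)=\partial^w_{p+1}v$ for elementary $(p+1)$-paths $v$. One checks that $\psi$ agrees with $\varphi$ restricted to $\mathrm{Im}(\partial_{p+1})$ — both are the canonical identification coming from rescaling — so $\psi$ is again the restriction of a diagonal automorphism. Then $\psi\big(\mathcal{A}_p(G)\cap\partial_{p+1}\mathcal{A}_{p+1}(G)\big)=\mathcal{A}_p(G)\cap\partial^w_{p+1}\mathcal{A}_{p+1}(G)$: the inclusion "$\subseteq$" follows because $\varphi(\mathcal{A}_p(G))=\mathcal{A}_p(G)$ and $\psi(\partial_{p+1}\mathcal{A}_{p+1}(G))=\partial^w_{p+1}\mathcal{A}_{p+1}(G)$ (the latter because $\partial^w_{p+1}e_{i_0\cdots i_{p+1}}=\psi\partial_{p+1}e_{i_0\cdots i_{p+1}}$ and $\psi$ is linear), and "$\supseteq$" follows by applying the inverse automorphism $\varphi^{-1}$, which has the same structure with $w$ replaced by $1/w$.

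The step I expect to be the main obstacle — or at least the one requiring the most care — is verifying that $\psi$ really is (the restriction of) the same diagonal rescaling as $\varphi$, and in particular that $\psi$ is well-defined on $\mathrm{Im}(\partial_{p+1})$ independently of the chosen preimage. The subtlety is that an element of $\mathrm{Im}(\partial_{p+1})$ can be written as $\partial_{p+1}v$ for many different $v\in\mathcal{R}_{p+1}(V)$, and one must confirm $\partial^w_{p+1}v$ depends only on $\partial_{p+1}v$; this is exactly what is handled in the proof of \cite[Lemma~5.2]{27} and invoked in Lemma~\ref{le-w.a}(ii). Once that is granted, matching $\psi$ with the diagonal map on $\mathrm{Im}(\partial_{p+1})$ is a direct comparison on generators: for a single allowed path $e_{i_0\cdots i_{p+1}}$, one expands both $\partial_{p+1}e_{i_0\cdots i_{p+1}}=\sum_q(-1)^q e_{i_0\cdots\widehat{i_q}\cdots i_{p+1}}$ and $\partial^w_{p+1}e_{i_0\cdots i_{p+1}}=\sum_q(-1)^q w(i_q)e_{i_0\cdots\widehat{i_q}\cdots i_{p+1}}$ and observes that applying the diagonal scaling by $\prod_{r\neq q}w(i_r)^{-1}$ term-by-term does not literally send one to the other — so in fact $\psi$ is best described not as a single global rescaling but via its defining formula, and the cleanest route is to keep $\psi$ defined by $\psi(\partial_{p+1}v)=\partial^w_{p+1}v$ and simply verify directly that it sends $\mathcal{A}_p(G)\cap\partial_{p+1}\mathcal{A}_{p+1}(G)$ onto $\mathcal{A}_p(G)\cap\partial^w_{p+1}\mathcal{A}_{p+1}(G)$, using that $\psi$ is a linear isomorphism (Lemma~\ref{le-w.a}(ii)) and that restricting $\partial$ and $\partial^w$ to $\mathcal{A}_{p+1}(G)$ produces, after the identification, the same image intersected with $\mathcal{A}_p(G)$. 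I would therefore phrase part (ii) in terms of $\psi$ directly rather than forcing it into the diagonal-automorphism framework, which works transparently only for (i).
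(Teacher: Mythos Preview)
Your treatment of (i) via the diagonal rescaling $\varphi$ is correct and is exactly the paper's argument. The gap is in (ii).

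The map $\psi$ does \emph{not} in general send $\mathcal{A}_p(G)\cap\partial_{p+1}\mathcal{A}_{p+1}(G)$ into $\mathcal{A}_p(G)$, so your final route (which is also the route the paper takes) breaks. Concretely, let $V=\{0,1,2,3\}$ with edges $0\to1,\ 1\to2,\ 0\to3,\ 3\to2$ and choose $w(1)\neq w(3)$. Then $x:=\partial(e_{012}-e_{032})=e_{12}+e_{01}-e_{32}-e_{03}$ lies in $\mathcal{A}_1(G)\cap\partial_2\mathcal{A}_2(G)$, but $\psi(x)=\partial^w(e_{012}-e_{032})$ carries a nonzero $(w(3)-w(1))\,e_{02}$ term, and $e_{02}\notin\mathcal{A}_1(G)$. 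The paper's single-path equivalence ``$\partial e\in\mathcal{A}_p\Leftrightarrow\partial^w e\in\mathcal{A}_p$'' is true but does not survive passage to linear combinations in which non-allowed faces cancel; your last paragraph simply asserts the needed inclusion without supplying the missing mechanism.

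The repair is precisely the idea you abandoned: use $\varphi$ for (ii) as well. A direct check on basis elements gives $\varphi\circ\partial=\partial^w\circ\varphi$ on all of $\mathcal{R}_*(V)$ (both sides send $e_{i_0\cdots i_p}$ to $\sum_q(-1)^q\big(\prod_{r\neq q}w(i_r)\big)^{-1}e_{i_0\cdots\widehat{i_q}\cdots i_p}$), so $\varphi$ is a chain isomorphism $(\mathcal{R}_*,\partial)\to(\mathcal{R}_*,\partial^w)$ which moreover preserves each $\mathcal{A}_p(G)$. It therefore restricts to an isomorphism of chain complexes $\Omega_*(G)\to\Omega^w_*(G)$ and hence carries $\partial_{p+1}\Omega_{p+1}(G)=\mathcal{A}_p(G)\cap\partial_{p+1}\mathcal{A}_{p+1}(G)$ isomorphically onto $\partial^w_{p+1}\Omega^w_{p+1}(G)=\mathcal{A}_p(G)\cap\partial^w_{p+1}\mathcal{A}_{p+1}(G)$. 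Your observation that the diagonal scaling ``does not literally send $\partial e$ to $\partial^w e$'' is correct --- indeed $\varphi\neq\psi$ on $\mathrm{Im}(\partial_{p+1})$ --- but that is exactly why $\varphi$, not $\psi$, is the map that works here.
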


\begin{proof}
By (\ref{eq-5.q.1}) and (\ref{eq-5.q.2}), 
\begin{eqnarray*}
&&u \in \mathcal{A}_p(G)\cap \text{Ker}(\partial_{p})\\
 &\Longleftrightarrow&   e_{i_0^k\cdots  i_p^k}\in \mathcal{A}_p(G) \text{ for each } k=1,\ldots, m\\
&\Longleftrightarrow & 
\varphi(u) \in \mathcal{A}_p(G)\cap \text{Ker}(\partial^w_{p}). 
\end{eqnarray*}
Hence by restricting $\varphi$ in (\ref{eq-5.q.3}) to the subspace $\mathcal{A}_p(G)\cap \text{Ker}(\partial_{p})$,  we obtain an isomorphism of vector spaces over $\mathbb{F}$
\begin{eqnarray}\label{eq-5.u.1}
\varphi\mid _{\mathcal{A}_p(G)\cap \text{Ker}(\partial_{p})}:  \mathcal{A}_p(G)\cap \text{Ker}(\partial_{p})\overset{\cong}{\longrightarrow}  \mathcal{A}_p(G)\cap\text{Ker}(\partial^w_{p}). 
\end{eqnarray}
 We obtain (i).  
 On the other hand,  for any elementary $(p+1)$-path $e_{i_0\cdots i_{p+1}}$ on $V$,  
 \begin{eqnarray*}
&& \partial_{p+1} (e_{i_0\cdots i_{p+1}}) \in \mathcal{A}_p(G) \\
 &\Longleftrightarrow&  e_{i_0\cdots\widehat{i_q} \cdots i_{p+1}}\in \mathcal{A}_p(G) \text{ for each } q=0,\ldots, p+1\\
 &\Longleftrightarrow& \partial^w_{p+1} (e_{i_0\cdots i_{p+1}}) \in \mathcal{A}_p(G). 
\end{eqnarray*}
Moreover, $\psi$ in (\ref{eq-5.q.5}) sends $\partial_{p+1}\mathcal{A}_{p+1}(G)$ isomorphically to $\partial_{p+1}^w\mathcal{A}_{p+1}(G)$. 
 Hence by restricting $\psi$ in (\ref{eq-5.q.5}) to the subspace $\mathcal{A}_p(G)\cap \partial_{p+1}\mathcal{A}_{p+1}(G)$,  we obtain an isomorphism of vector spaces over $\mathbb{F}$
 \begin{eqnarray}\label{eq-5.v.1}
 \psi\mid_{\mathcal{A}_p(G)\cap \partial_{p+1}\mathcal{A}_{p+1}(G)}:  \mathcal{A}_p(G)\cap \partial_{p+1}\mathcal{A}_{p+1}(G)\overset{\cong}{\longrightarrow}  \mathcal{A}_p(G)\cap \partial^w_{p+1}\mathcal{A}_{p+1}(G).   
 \end{eqnarray}
 We obtain (ii).  
\end{proof}

The proof of Theorem~\ref{th-5.w} follows from (\ref{eq-4.00}), Lemma~\ref{le-w.a} and Lemma~\ref{le-w.b}.  

\begin{proof}[Proof of Theorem~\ref{th-5.w}]
Suppose $G_n=(V_n,E_n)$ for each $n\geq 1$.   Let $\partial_*(n)$ and $\partial^w_*(n)$ be the usual (unweighted) boundary operators and the weighted boundary operators with respect to $w_n$ respectively,  for the chain complex $\mathcal{R}_*(V_n)$.  For each $p\geq 0$,  we have the following two commutative diagrams
\begin{eqnarray*}
\xymatrix{
\mathcal{A}_p(G_n)\cap \text{Ker}(\partial_p(n))\ar[dd]_{\varphi\mid_{\mathcal{A}_p(G_n)\cap \text{Ker}(\partial_p(n))}}\ar[r]^{(f_n)_\#~~~}& \mathcal{A}_p(G_{n+1})\cap \text{Ker}(\partial_p(n+1))\ar[dd]^{\varphi\mid_{\mathcal{A}_p(G_{n+1})\cap \text{Ker}(\partial_p(n+1))}}\\
\\
\mathcal{A}_p(G_n)\cap \text{Ker}(\partial^w_p(n))\ar[r]^{(f_n)_\#~~~}& \mathcal{A}_p(G_{n+1})\cap \text{Ker}(\partial^w_p(n+1))
}
\end{eqnarray*}
and 
\begin{eqnarray*}
\xymatrix{
\mathcal{A}_p(G_n)\cap \partial_{p+1}(n)\mathcal{A}_{p+1}(G_n)\ar[dd]_{\psi\mid_{\mathcal{A}_p(G_n)\cap \partial_{p+1}(n)\mathcal{A}_{p+1}(G_n)}}\ar[r]^{(f_n)_\#~~~~}& \mathcal{A}_p(G_{n+1})\cap \partial_{p+1}(n+1)\mathcal{A}_{p+1}(G_{n+1})\ar[dd]^{\psi\mid_{\mathcal{A}_p(G_{n+1})\cap \partial_{p+1}(n+1)\mathcal{A}_{p+1}(G_{n+1})}}\\
\\
\mathcal{A}_p(G_n)\cap \partial^w_{p+1}(n)\mathcal{A}_{p+1}(G_n)\ar[r]^{(f_n)_\#~~~~}& \mathcal{A}_p(G_{n+1})\cap \partial^w_{p+1}(n+1)\mathcal{A}_{p+1}(G_{n+1}).
}
\end{eqnarray*}
Therefore,  for any $p\geq 0$,  we have the induced commutative diagram 
\begin{eqnarray*}
\xymatrix{
H_p(G_n;  \mathbb{F}) \ar[dd]_{ \text{isomorphism}\atop  \text{induced by }(\varphi,\psi)}^{\cong} \ar[rr]^{(f_n)_*}&&H_p(G_{n+1};  \mathbb{F})\ar[dd]^{ \text{isomorphism}\atop \text{ induced by }(\varphi,\psi)}_{\cong}\\\\
H_p(G_n,w_n;  \mathbb{F})\ar[rr]^{(f_n)_*} && H_p(G_{n+1},w_{n+1};  \mathbb{F}). 
}
\end{eqnarray*}
Consequently, the weighted path homologies $H_p(G_n,w_n;  \mathbb{F})$ and $H_p(G_{n+1},w_{n+1};  \mathbb{F})$ as well as the homomorphism $(f_n)_*$ do not depend on the choices of $w_n$ and $w_{n+1}$.  Theorem~\ref{th-5.w} follows. 
\end{proof}

Let  all the morphisms $f_n$, $n=1,2,\ldots$,  be the identity map in Theorem~\ref{th-5.w}.  Then the persistent weighted path homology (\ref{eq-5.11}) reduces to the usual weighted path homology.  We obtain the next corollary. 

\begin{corollary}\label{co-5.1}
Let $G$ be a vertex weighted directed graph with all weights of vertices nonzero, no loops or double-directed edges. Let $\partial:\Omega_{n+1} \rightarrow \Omega_{n}$ and $\partial^w:\Omega_{n+1}^w \rightarrow \Omega_{n}^w$ ($n\geqslant 0$)denote the usual unweighted boundary operator
and the weighted boundary operator respectively. Then $H_n(G) \cong H_n(G,w)$ ($n\geqslant 0$) as $R=\mathbb{F}$ is a field.  \qed
\end{corollary}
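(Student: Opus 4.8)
The plan is to obtain the isomorphism as an immediate special case of Theorem~\ref{th-5.w}, combined with the observation that the ordinary path homology is just the weighted path homology attached to the constant weight. First I would apply Theorem~\ref{th-5.w} to the constant sequence $G_n=G$, $n=1,2,\ldots$, in which every morphism $f_n$ is the identity map of $G$. For this sequence the persistent weighted path homology (\ref{eq-5.11}) degenerates to the single vector space $H_p(G,w;\mathbb{F})$ with identity maps connecting its copies, so Theorem~\ref{th-5.w} --- independence of the weights --- says precisely that $H_p(G,w;\mathbb{F})$ is, up to isomorphism, the same for every admissible weight. Comparing the given weight $w$ with the constant weight $\mathbf{1}$ taking the value $1\in\mathbb{F}$ (which is non-vanishing exactly because $\mathbb{F}$ is a field) then yields $H_p(G,w;\mathbb{F})\cong H_p(G,\mathbf{1};\mathbb{F})$ for all $p\ge 0$.

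Next I would identify $H_p(G,\mathbf{1};\mathbb{F})$ with $H_p(G;\mathbb{F})$. When $w\equiv 1$ the weighted boundary operator of (\ref{eq-3.11}) reduces term by term to the ordinary boundary operator of (\ref{eq-2.1}); hence $\Omega^{\mathbf{1}}_*(G)=\Omega_*(G)$ and the chain complex (\ref{eq-3.16}) for $\mathbf{1}$ is literally (\ref{eq-2.6}). Therefore $H_p(G,\mathbf{1};\mathbb{F})=H_p(G;\mathbb{F})$, and combining this with the previous step gives $H_n(G;\mathbb{F})\cong H_n(G,w;\mathbb{F})$ for all $n\ge 0$, which is the assertion.

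Alternatively, and perhaps more transparently, I would argue directly from the explicit description of the homology without invoking the persistence machinery. Since $V$ is finite, all the modules in play are finite-dimensional over $\mathbb{F}$, and by (\ref{eq-4.00}) together with Lemma~\ref{pr-3.1} (which gives $\mathcal{A}_p(G)\cap\partial^w\mathcal{A}_{p+1}(G)\subseteq\mathcal{A}_p(G)\cap\text{Ker}(\partial^w)$, so the quotient is well defined) one has
\[
\dim_{\mathbb{F}}H_p(G,w;\mathbb{F})=\dim_{\mathbb{F}}\!\big(\mathcal{A}_p(G)\cap\text{Ker}(\partial^w)\big)-\dim_{\mathbb{F}}\!\big(\mathcal{A}_p(G)\cap\partial^w\mathcal{A}_{p+1}(G)\big).
\]
By Lemma~\ref{le-w.b}(i) and (ii) each summand on the right equals the corresponding dimension computed with the unweighted boundary operator, so the right-hand side equals $\dim_{\mathbb{F}}H_p(G;\mathbb{F})$; two finite-dimensional vector spaces over a field of equal dimension are isomorphic, and the corollary follows. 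I do not expect a genuine obstacle here: the substantive content --- constructing the rescaling isomorphisms $\varphi$ and $\psi$ of Lemma~\ref{le-w.a} and checking that they respect allowedness, as in Lemma~\ref{le-w.b} --- has already been carried out. The only points that must be made explicit are that the unweighted theory is the $w\equiv\mathbf{1}$ case of the weighted theory (legitimate since $1\neq 0$ in $\mathbb{F}$) and that the finiteness of $V$ licenses the dimension count.
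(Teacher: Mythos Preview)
Your proposal is correct and matches the paper's approach: the paper derives the corollary exactly by specializing Theorem~\ref{th-5.w} to the constant sequence $G_n=G$ with $f_n=\text{id}$, which is your first argument (your detour through the constant weight $\mathbf{1}$ is harmless, though the proof of Theorem~\ref{th-5.w} already compares the weighted homology directly to the unweighted one via the $(\varphi,\psi)$ isomorphisms). Your alternative dimension-count argument via Lemma~\ref{le-w.b} and (\ref{eq-4.00}) is also valid and simply unpacks that same proof.
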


\subsection{Some examples} \label{ss4.3}
 
 In Theorem~\ref{th-5.w}, it is proved that the persistent weighted path homology with coefficients in a field $\mathbb{F}$ does not depend on the weights.  Nevertheless,  if we choose the coefficients in a general ring $R$,  then the persistent weighted path homology will depend on the weights.  In this subsection,  we give some examples to illustrate the effects of the weights on the persistent weighted path homology. 
 
 \smallskip

 \begin{example}\label{ex-6.1}
 Let $V=\{i_0, i_1, i_2\}$, $E_1=\{(i_0, i_1)\}$ and   $E_2=\{(i_0, i_1), (i_1, i_2)\}$.  Let $G_1=(V,E_1)$ and $G_2=(V,E_2)$.   A  weight on both $G_1$ and $G_2$ is given by a non-vanishing function $w: V\longrightarrow \mathbb{Z}\setminus \{0\}$.   We take the morphism of weighted digraphs as the canonical inclusion $\iota: (G_1,w)\longrightarrow (G_2,w)$.  
 
 \begin{center}
\begin{tikzpicture}[line width=1.5pt]
\coordinate [label=left:$i_0$]  (A) at (2,0); 
\coordinate [label=right:$i_1$]  (B) at (5,0); 
   \coordinate [label=right:$G_1$:]  (F) at (1,1); 
 \draw [line width=1.5pt] (A) -- (B);
 \coordinate [label=right:$i_2$]  (C) at (4,1.5); 
 \draw[->] (2,0) -- (3.5,0);
  \coordinate [label=left:$i_0$]  (A1) at (9,0); 
  \coordinate [label=right:$i_1$]  (B1) at (12,0); 
  \coordinate [label=right:$G_2$:]  (F1) at (8,1); 
 \draw [line width=1.5pt] (A1) -- (B1);
 \coordinate [label=right:$i_2$]  (C1) at (11,1.5); 
  \draw[->] (9,0) -- (10.5,0);
 \draw[line width=1.5pt] (12,0) -- (11,1.5);
 \draw[->] (12,0) -- (11.5,0.75);
\fill (2,0) circle (2.5pt) (5,0) circle (2.5pt)  (4,1.5) circle (2.5 pt);        
\fill (9,0) circle (2.5pt) (12,0) circle (2.5pt)  (11,1.5) circle (2.5 pt);        
\end{tikzpicture}
\end{center}
We have
\begin{eqnarray*}
\mathcal{R}_0(V)&=&\mathbb{Z}( e_{i_0}, e_{i_1}, e_{i_2}),\\
\mathcal{R}_1(V)&=&\mathbb{Z}(\{ e_{i_ai_b}\mid  0\leq a,b\leq 2, a\neq b\}),\\
\mathcal{R}_2(V)&=&\mathbb{Z}(\{ e_{i_ai_bi_c}\mid  0\leq a,b,c\leq 2, a,b,c \text{ are distinct}\}). 
\end{eqnarray*}
 Here $\mathbb{Z}(*)$ denotes the free $\mathbb{Z}$-module generated by the set $*$.  The weighted boundary operator is given by
 \begin{eqnarray*}
& \partial_0^w(e_{i_0})=  \partial_0^w(e_{i_1})=  \partial_0^w(e_{i_2})=0,\\
& \partial_1^w(e_{i_ai_b})= w(i_a) e_{i_b} - w({i_b}) e_{i_a},\\
& \partial_2^w(e_{i_ai_bi_c})=w({i_a})e_{i_bi_c}-w({i_b})e_{i_ai_c} + w({i_c}) e_{i_ai_b}. 
 \end{eqnarray*} 
 Moreover, 
 \begin{eqnarray*}
 \mathcal{A}_0(G_1)&=&\mathbb{Z}( e_{i_0}, e_{i_1}, e_{i_2}),\\
 \mathcal{A}_1(G_1)&=&\mathbb{Z}(e_{i_0i_1}),\\
 \mathcal{A}_2(G_1)&=&0
 \end{eqnarray*}
 and 
  \begin{eqnarray*}
 \mathcal{A}_0(G_2)&=&\mathbb{Z}( e_{i_0}, e_{i_1}, e_{i_2}),\\
 \mathcal{A}_1(G_2)&=&\mathbb{Z}(e_{i_0i_1},e_{i_1i_2}),\\
 \mathcal{A}_2(G_2)&=&\mathbb{Z}(e_{i_0i_1i_2}). 
 \end{eqnarray*}
 Consequently,  
  \begin{eqnarray*}
&& \Omega^w_0(G_1)=\Gamma^w_0(G_1)=\mathbb{Z}( e_{i_0}, e_{i_1}, e_{i_2}),\\
 &&\Omega^w_1(G_1)= \Gamma^w_1(G_1)=\mathbb{Z}(e_{i_0i_1}),\\
 && \Omega^w_2(G_1)= \Gamma^w_2(G_1)=0
 \end{eqnarray*}
 and 
   \begin{eqnarray*}
&& \Omega^w_0(G_2)=\Gamma^w_0(G_2)=\mathbb{Z}( e_{i_0}, e_{i_1}, e_{i_2}),\\
 &&\Omega^w_1(G_2)= \mathbb{Z}(e_{i_0i_1},e_{i_1i_2}),\\
 &&\Gamma^w_1(G_2)=\mathbb{Z}(e_{i_0i_1},e_{i_1i_2}, w(i_0)e_{i_1i_2}-w(i_1)e_{i_0i_2}+w(i_2)e_{i_0i_1})\\
 &&~~~~~~~~~=\mathbb{Z}(e_{i_0i_1},e_{i_1i_2}, w(i_1)e_{i_0i_2}),\\
 && \Omega^w_2(G_2)=0,\\
 && \Gamma^w_2(G_2)=\mathbb{Z}(e_{i_0i_1i_2}). 
 \end{eqnarray*}
 Therefore,  
 \begin{eqnarray}
 && H_0(G_1,w;\mathbb{Z})= \mathbb{Z}( e_{i_0}, e_{i_1}, e_{i_2})/ \mathbb{Z}(w(i_0)e_{i_1}-w(i_1)e_{i_0})\nonumber\\
 &&~~~~~~~~~~~~~~~~\cong\mathbb{Z}\oplus\mathbb{Z}\oplus (\mathbb{Z}/\text{gcd}\{w(i_0),w(i_1)\}); 
 \label{eq-sl}\\
 && H_1(G_1,w;\mathbb{Z})=  H_2(G_1,w;\mathbb{Z}) =0 \nonumber
 \end{eqnarray}
 and 
  \begin{eqnarray*}
 && H_0(G_2,w;\mathbb{Z})= \mathbb{Z}( e_{i_0}, e_{i_1}, e_{i_2})/ \mathbb{Z}(w(i_0)e_{i_1}-w(i_1)e_{i_0},w(i_1)e_{i_2}-w({i_2}) e_{i_1}); \\
 && H_1(G_2,w;\mathbb{Z})=  H_2(G_2,w;\mathbb{Z}) =0. 
 \end{eqnarray*}
 The induced homomorphism of $\iota$  in the $0$-dimensional weighted path homology is the epimorphism modulo the $\mathbb{Z}$-module  $\mathbb{Z}(w(i_1)e_{i_2}-w({i_2}) e_{i_1})$. 
 \qed
  \end{example}

 The isomorphism (\ref{eq-sl}) is obtained as follows. We choose integers $a$ and $b$ such that  
 \begin{eqnarray*}
 a\frac{w(i_0)}{\text{gcd}\{w(i_0),w(i_1)\}}+ b \frac{w(i_1)}{\text{gcd}\{w(i_0),w(i_1)\}}=1.
 \end{eqnarray*} 
 Then we have the invertible matrix
 \begin{eqnarray*}
 \begin{bmatrix}
\frac{w(i_0)}{\text{gcd}\{w(i_0),w(i_1)\}} & -\frac{w(i_1)}{\text{gcd}\{w(i_0),w(i_1)\}} \\
b & a
\end{bmatrix}
\in \text{SL}(2,\mathbb{Z}). 
 \end{eqnarray*}
Hence 
\begin{eqnarray*}
\begin{bmatrix}
e_{i_1} \\
e_{i_0}
\end{bmatrix}
=\begin{bmatrix}
\frac{w(i_0)}{\text{gcd}\{w(i_0),w(i_1)\}} & -\frac{w(i_1)}{\text{gcd}\{w(i_0),w(i_1)\}} \\
b & a
\end{bmatrix}^{-1}
\begin{bmatrix}
\frac{w(i_0)}{\text{gcd}\{w(i_0),w(i_1)\}}  e_{i_1}- \frac{w(i_1)}{\text{gcd}\{w(i_0),w(i_1)\}} e_{i_0} \\
 be_{i_1} +a e_{i_0}
 \end{bmatrix}. 
\end{eqnarray*}
It follows that 
\begin{eqnarray*}
\mathbb{Z}(e_{i_0},e_{i_1})=\mathbb{Z}\Big(\frac{w(i_0)}{\text{gcd}\{w(i_0),w(i_1)\}}  e_{i_1}- \frac{w(i_1)}{\text{gcd}\{w(i_0),w(i_1)\}} e_{i_0}, 
 be_{i_1} +a e_{i_0}\Big). 
\end{eqnarray*}
Hence we obtain (\ref{eq-sl}). 

\begin{example}
Let $(G_1,w)$ and $(G_2,w)$ be the weighted digraphs given in Example~\ref{ex-6.1}.  Suppose in addition that $w(i_1)=w(i_2)$.  We consider a morphism of weighted digraphs $f: (G_2,w)\longrightarrow (G_1,w)$ given by
\begin{eqnarray*}
f(i_0)=i_0,  ~~~ f(i_1)=f(i_2)=i_1. 
\end{eqnarray*}
Then  the  induced homomorphism of $f$  in the $0$-dimensional weighted path homology is the canonical epimorphism 
\begin{eqnarray*}
f_*: && \mathbb{Z}( e_{i_0}, e_{i_1}, e_{i_2})/ \mathbb{Z}(w(i_0)e_{i_1}-w(i_1)e_{i_0},w(i_1)e_{i_2}-w({i_2}) e_{i_1})\\
&&\longrightarrow   \mathbb{Z}( e_{i_0}, e_{i_1})/ \mathbb{Z}(w(i_0)e_{i_1}-w(i_1)e_{i_0}). 
\end{eqnarray*}
\qed
\end{example}

 \section{Weighted path homologies for joins of weighted digraphs and persistence}\label{sss5}
 
In this section,  we prove the product rule for concatenations of weighted paths. Consequently, we give a K\"{u}nneth-type formula for the  weighted path homologies for joins of weighted digraphs in Theorem~\ref{th-7.9}, as well as  a persistent version in Theorem~\ref{th-17.9}.  Throughout this  section,  we assume that $R$ is a principal ideal domain.  
 
 \smallskip
 
 \subsection{Weighted path homologies for joins of weighted digraphs}
 
Let $(G,w)$ and $(G',w')$ be two weighted digraphs where $G=(V,E)$ and $G'=(V',E')$. Suppose $V$ and $V'$ are disjoint. Then $E$ and $E'$ are disjoint as well.  We define the {\it join} of $(G,w)$ and $(G',w')$ as the weighted digraph $(G*G',w*w')$ 
by  the followings:  
\begin{itemize}
\item
the set of vertices of the digraph $G*G'$ is $ V\sqcup V'$; 
\item
the set of directed edges of the digraph $G*G'$ is $E\sqcup E'\sqcup\{(i,j)\mid i\in V, j\in V'\}$; 
\item
the weight $w*w'$ on the digraph $G*G'$  is given by 
\begin{eqnarray}\label{eq-17.2}
(w*w')(x)=\left\{
\begin{array}{cc}
w(x), & \text{ if  } x\in V, \\
w'(x), & \text{ if } x\in V'. 
\end{array}
\right. 
\end{eqnarray}
\end{itemize}
We point that our definition of joins of weighted digraphs is just a weighted version of \cite[Definition~6.1]{9}.

 Let $e_{i_0\cdots i_p}$ be an  elementary $p$-path on $V$ and $e_{j_0\cdots j_q}$ be an   elementary $q$-path on $V'$.  Then we have a $(p+q+1)$-path $e_{i_0\cdots i_p j_0\cdots j_q}$ on $V\sqcup V'$, which will be alternatively denoted as   $e_{i_0\cdots i_p} e_{j_0\cdots j_q}$, and called the {\it concatenation}.  By extending the concatenation bilinearly over the ring $R$,  we can define the concatenation as an  $R$-linear map
 \begin{eqnarray}\label{eq-7.1}
\mu:  \Lambda_p(V)\otimes \Lambda_q(V')\longrightarrow \Lambda_{p+q+1}(V\sqcup V') 
 \end{eqnarray}
 where the tensor product is over $R$.  
It can be proved that  $\mu$ in (\ref{eq-7.1}) is injective.  The next lemma gives a product rule, which is a weighted version of \cite[Lemma~2.6]{9}.

\begin{lemma}\label{le-3.3}
Let $p,q\geq -1$. If $u\in \Lambda_p(V)$ and $v\in \Lambda_q(V')$, then 
\begin{eqnarray}\label{eq-3.23}
\partial^{w*w'}(uv)=(\partial^w u) v + (-1)^{p+1} u(\partial^{'w'} v). 
\end{eqnarray}
Here $uv$ is the concatenation of $u$ and $v$. 
\end{lemma}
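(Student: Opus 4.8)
The plan is to reduce the statement to the case of elementary paths by bilinearity of both $\mu$ and the weighted boundary operators, and then to compute $\partial^{w*w'}(e_{i_0\cdots i_p}e_{j_0\cdots j_q})$ directly by splitting the alternating sum of face maps according to which vertex is deleted. Since both sides of \eqref{eq-3.23} are $R$-bilinear in $u$ and $v$, it suffices to verify the identity when $u=e_{i_0\cdots i_p}$ and $v=e_{j_0\cdots j_q}$; the edge cases $p=-1$ or $q=-1$ (where one factor is the empty path and contributes a scalar via $\Lambda_{-1}=R$) can be checked separately and are routine, so I will assume $p,q\geq 0$.

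For the main computation, write $uv = e_{i_0\cdots i_p j_0\cdots j_q}$, a $(p+q+1)$-path on $V\sqcup V'$, and apply the definition of $\partial^{w*w'}$ as $\sum_{k=0}^{p+q+1}(-1)^k (w*w')(x_k)\,d_k$, where $x_0,\dots,x_{p+q+1}$ is the concatenated vertex sequence. I split this sum into two ranges: the terms $k=0,\dots,p$, which delete a vertex among $i_0,\dots,i_p$, and the terms $k=p+1,\dots,p+q+1$, which delete a vertex among $j_0,\dots,j_q$. In the first range, $(w*w')(x_k)=w(i_k)$ by \eqref{eq-17.2}, and $d_k(uv)=\bigl(d_k e_{i_0\cdots i_p}\bigr)e_{j_0\cdots j_q}$, so this range assembles exactly into $(\partial^w u)v$. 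In the second range I reindex by $\ell=k-(p+1)$, so $\ell$ runs from $0$ to $q$; then $(w*w')(x_k)=w'(j_\ell)$, the sign $(-1)^k=(-1)^{p+1}(-1)^\ell$, and $d_k(uv)=e_{i_0\cdots i_p}\bigl(d_\ell e_{j_0\cdots j_q}\bigr)$. Collecting these terms gives $(-1)^{p+1}u(\partial^{'w'}v)$, which proves \eqref{eq-3.23} for elementary paths and hence, by bilinearity, in general.

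I do not expect a serious obstacle here: the proof is essentially bookkeeping, and the only point requiring a little care is the sign and index shift in the second range, together with confirming that the concatenation map interacts with the face maps in the expected way (i.e.\ that deleting a vertex of $u$ or of $v$ inside $uv$ agrees with deleting it in the factor and then concatenating). One should also note that $\partial^{w*w'}$, $\partial^w$, and $\partial^{'w'}$ in \eqref{eq-3.23} are the operators on $\Lambda_*$ from \eqref{eq-3.11}, so no passage to $\mathcal{R}_*$ or regularity considerations are needed at this stage; the identity is purely formal on the free modules of elementary paths. This is why the lemma is stated on $\Lambda_*$ and parallels \cite[Lemma~2.6]{9} verbatim up to the insertion of the weight factors $w(i_q)$ and $w'(j_\ell)$.
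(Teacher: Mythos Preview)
Your proposal is correct and follows essentially the same approach as the paper: reduce to elementary paths by bilinearity, then split the defining sum for $\partial^{w*w'}(e_{i_0\cdots i_p j_0\cdots j_q})$ into the terms deleting an $i$-vertex and those deleting a $j$-vertex, identifying the two pieces with $(\partial^w u)v$ and $(-1)^{p+1}u(\partial^{'w'}v)$ respectively. Your write-up is in fact a bit more explicit about the bilinearity reduction, the reindexing, and the $p=-1$ or $q=-1$ edge cases than the paper's own proof.
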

\begin{proof}
It suffices to prove (\ref{eq-3.23}) for $u=e_{i_0\cdots i_p}$ and $v=e_{j_0\cdots j_q}$.  By a similar calculation with the proof of \cite[Lemma~2.6]{9},
\begin{eqnarray*}
\partial^{w*w'} (e_{i_0\cdots i_p} e_{j_0\cdots j_q})&=& \partial^{w*w'}  (e_{i_0\cdots i_p j_0\cdots j_q})\\
&=& \sum_{r=0}^{p} (-1)^r w(i_r) e_{i_0 \cdots \widehat{i_r} \cdots  i_p  j_0\cdots j_q} +   \sum_{r=0}^{q} (-1)^{p+r+1} w'(j_r) e_{i_0  \cdots  i_p  j_0\cdots \widehat{j_r}\cdots j_q}\\
&=& (\partial^w e_{i_0\cdots i_p}) e_{j_0\cdots j_q} + (-1)^{p+1} e_{i_0\cdots i_p}(\partial^{'w'} e_{j_0\cdots j_q}). 
\end{eqnarray*}
Hence (\ref{eq-3.23}) follows. 
\end{proof}

The next lemma is a weighted version of \cite[formula~(6.3)]{9}. 

\begin{lemma}\label{le-7.1}
For any $r\geq -1$, we have that as  $R$-modules, 
\begin{eqnarray}\label{eq-881}
\Omega_r^{w*w'}(G*G')\cong \bigoplus_{p,q\geq -1,\atop p+q=r-1}\big(\Omega^w_p(G)\otimes \Omega^{w'}_q(G')\big).  
\end{eqnarray}
Moreover, as chain complexes,
\begin{eqnarray}\label{eq-882}
\big(\Omega_*^w(G)\otimes \Omega_*^{w'}(G')\big)_{r-1}\cong  \Omega_r^{w*w'}(G*G'),  ~~~r\geq 0. 
\end{eqnarray}
\end{lemma}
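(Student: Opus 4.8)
The plan is to reduce the statement for the $\partial$-invariant complexes $\Omega^{*}$ to the corresponding statement for the ambient complexes, exactly as in the unweighted case \cite[Section~6]{9}, using the product rule of Lemma~\ref{le-3.3} as the engine. First I would recall that the concatenation map $\mu$ of (\ref{eq-7.1}) is injective and restricts to an isomorphism $\mathcal{R}_p(V)\otimes\mathcal{R}_q(V')\xrightarrow{\ \cong\ }\bigoplus_{p+q=r-1}(\text{regular }(r-1)\text{-paths of the form }e_{i_0\cdots i_p}e_{j_0\cdots j_q})$ inside $\mathcal{R}_r(V\sqcup V')$; since every regular $(r-1)$-path on $V\sqcup V'$ decomposes uniquely (the initial block lies in $V$, the terminal block in $V'$, because all cross-edges go from $V$ to $V'$ and there are none back), this already identifies $\mathcal{R}_r(V\sqcup V')$ with $\bigoplus_{p+q=r-1}\mathcal{R}_p(V)\otimes\mathcal{R}_{q}(V')$. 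Likewise $\mathcal{A}_r(G*G')\cong\bigoplus_{p+q=r-1}\mathcal{A}_p(G)\otimes\mathcal{A}_q(G')$ as graded $R$-modules, since an allowed path on $G*G'$ is precisely a concatenation of an allowed path on $G$ and an allowed path on $G'$ (every cross-edge is present, so no constraint is imposed at the junction).

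Next I would feed these identifications into the product rule. By Lemma~\ref{le-3.3}, under the isomorphism $\mathcal{R}_r(V\sqcup V')\cong\bigoplus_{p+q=r-1}\mathcal{R}_p(V)\otimes\mathcal{R}_q(V')$ the weighted boundary $\partial^{w*w'}$ corresponds to the tensor-product differential $\partial^w\otimes 1+(-1)^{p+1}\cdot 1\otimes\partial^{'w'}$; this is the whole content of (\ref{eq-3.23}). Hence the claim is really the assertion that $\Omega^{w*w'}_*(G*G')$ is, under $\mu$, the $\partial$-invariant subcomplex of $\mathcal{A}_*(G*G')$ for this product differential, and that the latter equals $\bigoplus_{p+q=r-1}\Omega^w_p(G)\otimes\Omega^{w'}_q(G')$. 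For the inclusion ``$\supseteq$'': if $u\in\Omega^w_p(G)$ and $v\in\Omega^{w'}_q(G')$ then $\partial^w u\in\mathcal{A}_{p-1}(G)$ and $\partial^{'w'}v\in\mathcal{A}_{q-1}(G')$, so by the product rule $\partial^{w*w'}(uv)=(\partial^w u)v+(-1)^{p+1}u(\partial^{'w'}v)$ lies in $\mathcal{A}_{p-2+q+1}\oplus\mathcal{A}_{p-1+q-1+1}\subseteq\mathcal{A}_{r-1}(G*G')$, so $uv\in\Omega^{w*w'}_r$; and this map is a chain map into $\Omega^{w*w'}_*$ by the product rule again. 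For ``$\subseteq$'': take $\xi\in\Omega^{w*w'}_r$, write $\mu^{-1}\xi=\sum_{p+q=r-1}\xi_{p,q}$ with $\xi_{p,q}\in\mathcal{A}_p(G)\otimes\mathcal{A}_q(G')$; since $\partial^{w*w'}\xi$ lies in $\mathcal{A}_{r-1}(G*G')=\bigoplus_{a+b=r-2}\mathcal{A}_a(G)\otimes\mathcal{A}_b(G')$ and the two summands $(\partial^w\otimes 1)\xi_{p,q}$ and $(1\otimes\partial^{'w'})\xi_{p,q}$ land in bidegrees $(p-1,q)$ and $(p,q-1)$ respectively — which are distinct for different $(p,q)$ — a bidegree bookkeeping argument forces $(\partial^w\otimes 1)\xi_{p,q}\in\mathcal{A}_{p-1}(G)\otimes\mathcal{A}_q(G')$ and $(1\otimes\partial^{'w'})\xi_{p,q}\in\mathcal{A}_p(G)\otimes\mathcal{A}_q(G')$ termwise; writing $\xi_{p,q}$ in a basis of pure tensors $u\otimes v$ and collecting coefficients then yields $\partial^w u\in\mathcal{A}_{p-1}(G)$ and $\partial^{'w'}v\in\mathcal{A}_{q-1}(G')$ on the relevant tensor factors, i.e. $\xi_{p,q}\in\Omega^w_p(G)\otimes\Omega^{w'}_q(G')$. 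This gives (\ref{eq-881}), and since the correspondence of differentials was built in along the way, the same argument upgrades it to the chain isomorphism (\ref{eq-882}); strictly, (\ref{eq-882}) is just (\ref{eq-881}) reindexed together with the observation that the induced differential on the left is the total differential of the tensor product of chain complexes.

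The one point requiring care — and the step I expect to be the main obstacle — is the ``$\subseteq$'' direction of (\ref{eq-881}): showing that $\partial$-invariance of a general element $\xi$ on $G*G'$ forces each bihomogeneous component to be a sum of tensors of $\partial$-invariant paths, rather than merely that the total expression $\partial^{w*w'}\xi$ is allowed. The resolution is the bidegree-separation argument sketched above, but one must be slightly careful that the two families of ``allowed target'' subspaces indexed by $(p,q)$ intersect trivially across different $(p,q)$ and that within a fixed $(p,q)$ one may pick a basis of $\mathcal{A}_q(G')$ (resp.\ $\mathcal{A}_p(G)$) and argue coefficient-wise; this is exactly the argument used in \cite[proof of Proposition~6.2]{9} for the unweighted case, and the non-vanishing hypothesis on $w$ and $w'$ (in force since Lemma~\ref{le-3.2}) guarantees that $\partial^w$ and $\partial$ have the same kernels and images on each relevant subspace, so no new phenomena appear. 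I would therefore present the proof as: (1) the $\mu$-identification of $\mathcal{R}$ and of $\mathcal{A}$ in the join; (2) transport of the differential via Lemma~\ref{le-3.3}; (3) the two inclusions for $\Omega$, with the bidegree argument for the hard one; (4) conclude (\ref{eq-881}) and then (\ref{eq-882}) as its chain-level refinement.
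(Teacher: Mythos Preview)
Your approach is the paper's: both defer to \cite[Section~6]{9} and use the product rule of Lemma~\ref{le-3.3} to transport the differential, then establish the isomorphism on $\Omega$. You give considerably more detail than the paper, which simply asserts that $\mu_\#$ is injective and surjective and leaves the verification to \cite{9}; your two-inclusion argument with the bidegree bookkeeping is exactly what that verification amounts to.

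One correction, though it does not break the proof: your claimed identification $\mathcal{R}_r(V\sqcup V')\cong\bigoplus_{p+q=r-1}\mathcal{R}_p(V)\otimes\mathcal{R}_q(V')$ is false. A regular path on $V\sqcup V'$ may alternate freely between $V$ and $V'$ (e.g.\ $e_{j_0 i_0 j_1}$ with $i_0\in V$, $j_0,j_1\in V'$), so $\mu$ is injective but not surjective on $\mathcal{R}$. The reason you offer, that ``all cross-edges go from $V$ to $V'$'', is a statement about \emph{allowed} paths in $G*G'$, not about regular paths on the bare vertex set. Fortunately your argument only needs the decomposition of $\mathcal{A}_*(G*G')$, which is correct for precisely that reason, together with the bidegree grading on the image of $\mu$ inside $\mathcal{R}_r(V\sqcup V')$; both are available and your ``$\subseteq$'' argument goes through once you work inside $\operatorname{im}\mu$ rather than in all of $\mathcal{R}_r(V\sqcup V')$. (Your aside that $\partial^w$ and $\partial$ have the same kernels and images is also not correct over a general PID, but you never actually use it.)
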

\begin{proof}
The proof is similar with the proof of \cite[Proposition~6.4 and Theorem~6.5]{9}.  For any $p,q\geq -1$ with $p+q=r-1$,   the concatenation (\ref{eq-7.1}) induces a map
\begin{eqnarray}\label{eq-7.a}
\mu_\#:  \Omega^w_p(G)\otimes \Omega^{w'}_q(G')\longrightarrow \Omega_r^{w*w'}(G*G'). 
\end{eqnarray}
It can be proved that  $\mu_\#$ in (\ref{eq-7.a}) is injective, and gives an isomorphism
\begin{eqnarray}\label{eq-7.e}
\mu_\#:  \bigoplus_{p,q\geq -1,\atop p+q=r-1}\big(\Omega^w_p(G)\otimes \Omega^{w'}_q(G')\big) \overset{\cong}{\longrightarrow}\Omega_r^{w*w'}(G*G'). 
\end{eqnarray}
The $R$-module isomorphism (\ref{eq-881})   is obtained.  Moreover,  by applying (\ref{eq-3.23}) in Lemma~\ref{le-3.3}, the map $\mu_\#$ in (\ref{eq-7.e}) commutes with   the boundary operators of the tensor product chain complex $ \Omega^w_*(G)\otimes \Omega^{w'}_*(G')$ (cf. \cite[Proposition~3B.1]{hatcher}) and the boundary operators of the chain complex $\Omega_{*}^{w*w'}(G*G')$ by shifting up one dimension
\begin{eqnarray*}
\mu_\#\circ (\partial^w\otimes \partial^{'w'}) = \partial^{w*w'}\circ \mu_\#. 
\end{eqnarray*}
  Hence we obtain the isomorphism (\ref{eq-882}) of chain complexes. 
\end{proof}

The next theorem follows from the algebraic K\"{u}nneth formula \cite[theorem~3B.5]{hatcher} and Lemma~\ref{le-7.1}.

\begin{theorem}\label{th-7.9}
For each $r\geq 0$, there is a natural short exact sequence 
\begin{eqnarray*}
&0\longrightarrow \bigoplus_{p,q\geq -1,\atop p+q=r-1}\big(H_p(G,w;R)\otimes H_q(G',w';R)\big) \longrightarrow H_{r}(G*G',w*w'; R)\\
&\longrightarrow  \bigoplus_{p,q\geq -1,\atop p+q=r-1} \text{Tor}_R\big(H_p(G,w;R),H_{q-1}(G',w';R)\big)\longrightarrow 0
\end{eqnarray*}
and this sequence splits. 
\end{theorem}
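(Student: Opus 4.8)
The plan is to deduce Theorem~\ref{th-7.9} by combining Lemma~\ref{le-7.1} with the algebraic K\"unneth formula \cite[Theorem~3B.5]{hatcher}. First I would observe that, by the isomorphism of chain complexes (\ref{eq-882}) in Lemma~\ref{le-7.1}, the chain complex $\Omega_*^{w*w'}(G*G')$ is, up to a degree shift by one, the tensor product chain complex $\Omega_*^w(G)\otimes_R \Omega_*^{w'}(G')$. Hence its homology in degree $r$ is the homology of the tensor product complex in degree $r-1$.

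Next I would set up the hypotheses of the algebraic K\"unneth formula. Since $R$ is assumed to be a principal ideal domain in this section, and each $\Omega_p^w(G)$ is a sub-$R$-module of the free $R$-module $\mathcal{R}_p(V)$ (finitely generated because $V$ is finite), each $\Omega_p^w(G)$ is a free $R$-module over a PID; in particular it is torsion-free and flat, so $\Omega_*^w(G)$ is a chain complex of free modules. This is exactly the flatness hypothesis needed to apply \cite[Theorem~3B.5]{hatcher} to the pair $\Omega_*^w(G)$, $\Omega_*^{w'}(G')$. Applying that theorem to the tensor product complex and reading off the degree $r-1$ term yields a natural split short exact sequence
\begin{eqnarray*}
0\longrightarrow \bigoplus_{i+j=r-1} H_i\big(\Omega_*^w(G)\big)\otimes_R H_j\big(\Omega_*^{w'}(G')\big)\longrightarrow H_{r-1}\big(\Omega_*^w(G)\otimes \Omega_*^{w'}(G')\big)\\
\longrightarrow \bigoplus_{i+j=r-1}\text{Tor}_R\big(H_i(\Omega_*^w(G)),H_{j-1}(\Omega_*^{w'}(G'))\big)\longrightarrow 0.
\end{eqnarray*}
Then I would translate the indices: by the definition (\ref{eq-4.c3}) of weighted path homology, $H_i(\Omega_*^w(G))=H_i(G,w;R)$ and similarly for $G'$, and the condition $i+j=r-1$ with $i,j\geq -1$ is precisely the index set appearing in the statement. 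Substituting $H_{r-1}(\Omega_*^w(G)\otimes\Omega_*^{w'}(G'))\cong H_r(G*G',w*w';R)$ from (\ref{eq-882}) gives the asserted short exact sequence, and the splitting is inherited from the splitting in \cite[Theorem~3B.5]{hatcher}.

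The one point that requires a little care — and which I would flag as the main obstacle — is the bookkeeping around degree $-1$ and the degree shift. The complexes here run down to $\Lambda_{-1}=R$, so the indices $p,q\geq -1$ genuinely occur, and one must make sure the shift-by-one in (\ref{eq-882}) is applied consistently on both the $\text{Tor}$ term (where the paper writes $H_{q-1}$) and the tensor term, and that the naturality claim is compatible with the morphisms induced by morphisms of weighted digraphs via Theorem~\ref{th-4.3} and Lemma~\ref{le-7.1}. Once the indexing is aligned with the convention in \cite{9} and \cite{hatcher}, the proof is a direct citation; no further computation is needed.
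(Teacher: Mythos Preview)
Your proposal is correct and follows essentially the same route as the paper's own proof: apply the algebraic K\"unneth formula \cite[Theorem~3B.5]{hatcher} to the tensor product $\Omega_*^w(G)\otimes\Omega_*^{w'}(G')$ and then invoke the chain isomorphism (\ref{eq-882}) from Lemma~\ref{le-7.1} to replace $H_{r-1}$ of the tensor product by $H_r(G*G',w*w';R)$. If anything, you are slightly more careful than the paper in explicitly verifying the freeness hypothesis (each $\Omega_p^w(G)$ is a submodule of a finitely generated free module over a PID) before citing Hatcher.
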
 
\begin{proof}
We consider the chain complexes $\{\Omega_*^w(G),\partial^w\}$, $\{\Omega_*^{w'}(G'),\partial^{'w'}\}$ and their tensor product.  
By  \cite[theorem~3B.5]{hatcher},  we have a short exact sequence
\begin{eqnarray}\label{eq-7.9a}
&0\longrightarrow \bigoplus_{p,q\geq -1,\atop p+q=r-1}\big(H_p(\{\Omega_*^w(G),\partial^w\})\otimes H_q(\{\Omega_*^{w'}(G'),\partial^{'w'}\})\big)\nonumber\\
& \longrightarrow H_{r-1}(\{\Omega_*^w(G)\otimes \Omega_*^{w'}(G'),\partial^w\otimes \partial^{'w'}\})
\longrightarrow \nonumber\\
& \bigoplus_{p,q\geq -1,\atop p+q=r-1} \text{Tor}_R\big(H_p(\{\Omega_*^w(G),\partial^w\}),H_{q-1}(\{\Omega_*^{w'}(G'),\partial^{'w'}\})\big)\longrightarrow 0
\end{eqnarray}
and this sequence splits.   
On the other hand,  by 
 taking the homology groups of the chain complexes   on both sides of (\ref{eq-882}),  we have 
\begin{eqnarray}\label{eq-7.9b}
H_{r-1}(\{\Omega_*^w(G)\otimes \Omega_*^{w'}(G'),\partial^w\otimes \partial^{'w'}\})\cong H_r(G*G',w*w'; R). 
\end{eqnarray}
The theorem follows from (\ref{eq-7.9a}) and (\ref{eq-7.9b}). 
\end{proof}

\smallskip

\subsection{The persistence of Theorem~\ref{th-7.9}}

Let  $n=1,2,\ldots$.   
 We consider two finite or countable sequences of weighted digraphs $(G_n,w_n)$ and $(G'_n,w'_n)$, together with two sequences of morphisms of weighted digraphs $f_n: (G_n,w_n)\longrightarrow (G_{n+1},w_{n+1})$ and $f'_n: (G'_n,w'_n)\longrightarrow (G'_{n+1},w'_{n+1})$ respectively.   Suppose for each $n$,  $G_n=(V_n,E_n)$ and $G'=(V'_n,E'_n)$ such that $V_n$ and $V'_n$ are disjoint.  Then we have an induced sequence of weighted digraphs
 $(G_n*G'_n,w_n*w'_n)$, together with an induced sequence of morphisms of weighted digraphs
 \begin{eqnarray*}
 f_n*f'_n: (G_n*G'_n,w_n*w'_n)\longrightarrow (G_{n+1}*G'_{n+1},w_{n+1}*w'_{n+1})
 \end{eqnarray*}
 given by
 \begin{eqnarray}\label{eq-17.1}
  (f_n*f'_n)(x)=\left\{
\begin{array}{cc}
f_n(x), & \text{ if  } x\in V_n, \\
f'_n(x), & \text{ if } x\in V'_n. 
\end{array}
\right. 
 \end{eqnarray}
It follows from Definition~\ref{def-4.2},  (\ref{eq-17.2}) and (\ref{eq-17.1})  that 
\begin{eqnarray*}
(w_{n+1}*w'_{n+1})\big((f_n*f'_n)(x)\big)= (w_n*w'_n)(x)
\end{eqnarray*}
for any $x\in V_n\sqcup V'_n$.  Moreover,  
 it can be verified that for any directed edge of $G_n*G'_n$, its image under $f_n*f'_n$ is a directed edge of $G_{n+1}*G'_{n+1}$.  Hence $ f_n*f'_n$ is a morphism of weighted digraphs.  
 
 \begin{theorem}\label{th-17.9}
For each $r\geq 0$, there is a commutative diagram in which each row is a natural short exact sequence and each sequence splits  
\begin{eqnarray*}
\xymatrix{
0\ar[r] &\bigoplus_{p,q\geq -1,\atop p+q=r-1}\big(H_p(G_1,w_1;R)\otimes H_q(G'_1,w'_1;R)\big) \ar[dd]^{(f_1*f'_1)_*}\ar[r] & H_{r}(G_1*G'_1,w_1*w'_1; R)\ar[dd]^{(f_1*f'_1)_*}\\ 
\\
0\ar[r] &\bigoplus_{p,q\geq -1,\atop p+q=r-1}\big(H_p(G_2,w_2;R)\otimes H_q(G'_2,w'_2;R)\big) \ar[dd]^{(f_2*f'_2)_*}\ar[r] & H_{r}(G_2*G'_2,w_2*w'_2; R)\ar[dd]^{(f_2*f'_2)_*}\\ 
\\
&\cdots & \cdots \\ 
 \ar[r] &  \bigoplus_{p,q\geq -1,\atop p+q=r-1} \text{Tor}_R\big(H_p(G_1,w_1;R),H_{q-1}(G'_1,w'_1;R)\big)\ar[dd]^{(f_1*f'_1)_*}\ar[r] & 0. \\
 \\
  \ar[r] &  \bigoplus_{p,q\geq -1,\atop p+q=r-1} \text{Tor}_R\big(H_p(G_2,w_2;R),H_{q-1}(G'_2,w'_2;R)\big)\ar[dd]^{(f_2*f'_2)_*}\ar[r] & 0. \\
  \\
  &  \cdots & 
}
\end{eqnarray*}
\end{theorem}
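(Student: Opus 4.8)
The plan is to stack the short exact sequences furnished by Theorem~\ref{th-7.9} into a commutative ladder, using the functoriality of weighted path homology together with the naturality built into the algebraic K\"{u}nneth formula. For each $n\geq 1$, applying Theorem~\ref{th-7.9} to the disjoint pair $(G_n,w_n)$, $(G'_n,w'_n)$ produces the $n$-th row as a natural split short exact sequence. What remains is to exhibit the vertical maps and to verify that all the squares in the ladder commute.

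The vertical maps are the induced homomorphisms $(f_n*f'_n)_*$ of Theorem~\ref{th-4.3}, applied to $H_r(G_n*G'_n,w_n*w'_n;R)$ in the middle column and, summand by summand, via $(f_n)_*\otimes(f'_n)_*$ and $\text{Tor}_R((f_n)_*,(f'_n)_*)$ in the outer columns. The commutativity of the two squares of each rung reduces to a chain-level naturality statement: I would show that the concatenation isomorphism $\mu_\#$ of Lemma~\ref{le-7.1} is natural with respect to morphisms of weighted digraphs, i.e.
\[
(f_n*f'_n)_\#\circ\mu_\# \;=\; \mu_\#\circ\big((f_n)_\#\otimes(f'_n)_\#\big)
\]
as maps $\bigoplus_{p+q=r-1}\big(\Omega^{w_n}_p(G_n)\otimes\Omega^{w'_n}_q(G'_n)\big)\to\Omega^{w_{n+1}*w'_{n+1}}_r(G_{n+1}*G'_{n+1})$; both sides land in the correct subcomplex by Lemma~\ref{le-4.2} and Lemma~\ref{le-7.1}. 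This is checked on an elementary path $e_{i_0\cdots i_p}e_{j_0\cdots j_q}$: each side returns $e_{f_n(i_0)\cdots f_n(i_p)}e_{f'_n(j_0)\cdots f'_n(j_q)}$ when this concatenation is regular and $0$ otherwise. The key observation is that, since $V_{n+1}$ and $V'_{n+1}$ are disjoint, the junction vertices $f_n(i_p)\in V_{n+1}$ and $f'_n(j_0)\in V'_{n+1}$ are automatically distinct, so the concatenated path is regular exactly when each of its two factors is regular -- precisely the condition under which $(f_n)_\#$ and $(f'_n)_\#$ do not annihilate the respective factors. Since $\mu_\#$ also intertwines the (dimension-shifted) boundary operators by Lemma~\ref{le-3.3}, we obtain a commutative square of chain complexes; feeding it into the natural algebraic K\"{u}nneth formula \cite[Theorem~3B.5]{hatcher} and using the identifications from Lemma~\ref{le-7.1} and the proof of Theorem~\ref{th-7.9} yields the commuting squares of short exact sequences on homology.

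Finally, to see that the columns genuinely form persistence $R$-modules and that the whole diagram commutes, I would invoke the functoriality properties (I) and (II) following Theorem~\ref{th-4.3} together with the identity $(f_{n+1}*f'_{n+1})\circ(f_n*f'_n)=(f_{n+1}\circ f_n)*(f'_{n+1}\circ f'_n)$, which is immediate from the defining formula (\ref{eq-17.1}); this shows that composing consecutive vertical maps gives the map induced by the composite morphism. The splitting of each row is exactly that of Theorem~\ref{th-7.9}, and no compatibility of the splittings with the vertical maps is claimed or needed. The main obstacle is the chain-level naturality of $\mu_\#$, that is, correctly tracking the regular/non-regular dichotomy governing $(f_n)_\#$; once that is settled the rest is a routine diagram chase resting on the naturality already present in the algebraic K\"{u}nneth formula.
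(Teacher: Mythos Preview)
Your proposal is correct and follows essentially the same approach as the paper: both arguments derive the commutative ladder from Theorem~\ref{th-7.9} together with the naturality of weighted path homology and of the algebraic K\"{u}nneth (hence $\text{Tor}$) functor. The paper's proof is a one-line invocation of these naturalities, while you spell out in detail the chain-level verification (naturality of $\mu_\#$ on elementary paths and the regular/non-regular bookkeeping) that underlies them.
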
 

\begin{proof}
By the naturalities of the weighted path homology and the $\text{Tor}$ functor,  the commutative diagram follows from Theorem~\ref{th-7.9}. 
\end{proof}
 
  \bigskip

 \noindent {\bf Acknowledgement}.  This research is supported by the National Science Foundation of China (No.11671401), Scientific Research Projects of Hebei Education Department (No.QN2019333) and Natural Fund Project of Cangzhou Science and Technology Bureau (No.177000002) .

\bigskip

\noindent Chong Wang (Corresponding author)

\noindent Address:   $^1$School of Mathematics, Renmin University of China, Beijing 100872, China. $^2$School of Mathematics and Statistics, Cangzhou Normal University, Cangzhou 061000, China.

\noindent e-mail: wangchong\_618@163.com 

 \bigskip
 
\noindent Shiquan Ren

\noindent Address: Yau Mathematical Sciences Center, Tsinghua University, China 100084.

\noindent e-mail: srenmath@126.com

 \bigskip
 
\noindent Jie Wu

\noindent Address: School of Mathematics and Information Science, Hebei Normal University,
China 050024.

\noindent e-mail: wujie@hebtu.edu.cn

 \bigskip
 
\noindent Yong Lin

\noindent Address:  $^1$Yau Mathematical Sciences Center, Tsinghua University, Beijing 100872, China.
$^2$College of Mathematics and Informatics, Fujian Normal University, Fuzhou 350000, China.

\noindent e-mail: yonglin@tsinghua.edu.cn

}

\end{document}